\theoremstyle{plain}
\newtheorem{theorem}{Theorem}
\newtheorem{corollary}[theorem]{Corollary}
\newtheorem{lemma}[theorem]{Lemma}
\theoremstyle{definition}
\newtheorem{definition}[theorem]{Definition}
\newtheorem{example}[theorem]{Example}
\theoremstyle{remark}
\newtheorem{remark}[theorem]{Remark}
\numberwithin{theorem}{section}
\newcommand{\mspec}[1]{\mbox{\rm{mspec}}(#1)}
\newcommand{\Add}{\mbox{\rm{Add\,}}}
\newcommand{\Filt}{\mbox{\rm{Filt\,}}}
\newcommand{\Hom}[3]{\mbox{\rm{Hom}}_{#1}(#2,#3)}
\newcommand{\Ext}[4]{\mbox{\rm{Ext}}^{#1}_{#2}(#3,#4)}
\newcommand{\Tor}[4]{\mbox{\rm{Tor}}_{#1}^{#2}(#3,#4)}
\newcommand{\rfmod}[1]{\mbox{\rm{mod}--}{#1}}
\newcommand{\rmod}[1]{\mbox{\rm{Mod}--}{#1}}
\newcommand{\ModR}{\text{Mod-}R}
\newcommand{\Ker}[1]{\mbox{\rm{Ker}}(#1)}
\newcommand{\leqk}{${\leq\kappa}$}
\newcommand{\sipka}[1]{\stackrel{#1}\longrightarrow}
\let\ol=\overline
\let\hrp=\upharpoonright
\begin{document}

\title{Approximations and locally free modules}
\author{\textsc{Alexander Sl\' avik and Jan Trlifaj}}
\address{Charles University, Faculty of Mathematics and Physics, Department of Algebra \\
Sokolovsk\'{a} 83, 186 75 Prague 8, Czech Republic}
\email{slavik.alexander@seznam.cz}
\email{trlifaj@karlin.mff.cuni.cz}

\date{\today}
\thanks{Research supported by GA\v CR 201/09/0816.}
\begin{abstract} For any set of modules $\mathcal S$, we prove the existence of precovers (right approximations) for all classes of modules of bounded $\mathcal C$-resolution dimension, where $\mathcal C$ is the class of all $\mathcal S$-filtered modules. In contrast, we use infinite dimensional tilting theory to show that the class of all locally free modules induced by a non-$\sum$-pure-split tilting module is not precovering. Consequently, the class of all locally Baer modules is not precovering for any countable hereditary artin algebra of infinite representation type.      
\end{abstract}

\maketitle

\section*{Introduction} 

A class $\mathcal C$ of modules is said to be \emph{decomposable}, provided there is a cardinal $\kappa$ such that each module in $\mathcal C$ is a direct sum of $< \kappa$-presented modules from $\mathcal C$.  
For example, the class $\mathcal P_0$ of all projective modules is decomposable by a classic theorem by Kaplansky, but the class $\mathcal I _0$ of all injective modules is decomposable only if $R$ is right noetherian, by a classic theorem by Faith and Walker. The decomposability of the class $\rmod R$ of all modules is equivalent to $R$ being a right pure-semisimple ring, so it is quite rare. In fact, the existence of  
a cardinal $\kappa$ such that every direct product of copies of a module $M$ is a direct sum of $< \kappa$-presented modules already implies that the module $M$ is $\sum$-pure-injective, cf.\ \cite[\S4]{HZ}.       

In contrast, deconstructible classes are ubiquitous. Recall \cite{E} that a class $\mathcal C$ is \emph{deconstructible} provided there is a cardinal $\kappa$ such that each module $M \in \mathcal C$ is $\mathcal C ^{<\kappa}$-filtered, where $\mathcal C ^{<\kappa}$ denotes the class of all $< \kappa$-presented modules from $\mathcal C$. Here, for a class $\mathcal D$, a module $M$ is said to be \emph{$\mathcal D$-filtered} (or a a \emph{transfinite extension} of the modules in $\mathcal D$), provided there exists an increasing chain $( M_\alpha \mid \alpha \leq \sigma )$ of submodules of $M$ with the following properties: $M_0 = 0$, $M_\alpha = \bigcup_{\beta < \alpha} M_\beta$ for each limit ordinal $\alpha \leq \sigma$, $M_{\alpha +1}/M_\alpha \cong D_\alpha$ for some $D_\alpha \in \mathcal D$, and $M_\sigma = M$. This chain is called 
a \emph{$\mathcal D$-filtration} of the module $M$.

Clearly, each decomposable class is deconstructible. So is the class $\rmod R$ for any ring $R$, as well as the classes $\mathcal P _n$, $\mathcal I _n$ and $\mathcal F _n$ of all modules of projective, injective, and flat dimension $\leq n$ respectively, see \cite{AEJO} and \cite{BEE}. In fact, if $\mathcal S$ is an arbitrary set of modules, then the class ${}^\perp (\mathcal S ^\perp)$ is deconstructible and closed under transfinite extensions, \cite{ST}. The latter fact implies deconstructibility of many classes of modules studied in homological algebra. (For a class of modules $\mathcal C$, we define its Ext-orthogonal classes by ${}^\perp \mathcal C = \mbox{Ker}\Ext 1R{-}{\mathcal C}$ and $\mathcal C ^\perp = \mbox{Ker}\Ext 1R{\mathcal C}{-}$.)

The key property of deconstructible classes closed under transfinite extensions is that they are precovering, and hence provide for approximations. Recall that a class $\mathcal C$ is \emph{precovering} in case for each module $M$ there exists a morphism $f \in \Hom RCM$ with $C \in \mathcal C$, such that each morphism $f^\prime \in \Hom R{C^\prime}M$ with $C^\prime \in \mathcal C$ factorizes through $f$. Such $f$ is called a \emph{$\mathcal C$-precover} of the module $M$. Given a precovering class $\mathcal C$, it is possible to develop relative homological algebra by replacing the class of all projective modules $\mathcal P _0$ with $\mathcal C$, \cite{EJ}. The abundance of precovering classes also makes it possible to study problems in module theory by choosing the approximations best fitting the particular setting of the problem, cf.\ \cite[Vol.1]{GT}.   

\medskip 
It may appear that all classes of modules closed under transfinite extensions are deconstructible, and hence precovering. The big surprise due to Eklof and Shelah \cite{ES} says that it is consistent with ZFC that the class ${}^\perp \{ \mathbb Z \}$ of all Whitehead groups is not precovering. However, the latter fact is not provable in ZFC, because it is also consistent that $^\perp \{ \mathbb Z \} = \mathcal P _0$, \cite{S}. Further consistency results on the non-deconstructibility of the classes of the form $^\perp \mathcal C$ were proved in \cite{EST}, but it is still an open problem whether there exists (in ZFC) a non-deconstructible class of modules of the form $^\perp \mathcal C$. 

Recently, it has been shown in \cite{HT} that there do exists non-deconstructible classes of modules closed under transfinite extensions (but not of the form $^\perp \mathcal C$), namely the class of all flat Mittag-Leffler modules over any non-right perfect ring. A question arose of whether the latter result is exceptional, or represents a more general phenomenon.   

\medskip
In Section 1 of this paper, we prove further positive results concerning deconstructibility and existence of approximations. We generalize the results from \cite{AEJO} mentioned above in a different direction: we prove that if $\mathcal S$ is a set of modules and $\mathcal C$ the class of all $\mathcal S$-filtered modules, then for each $n \geq 0$, the class of all modules of $\mathcal C$-resolution dimension $\leq n$ is deconstructible, and hence precovering (Corollary \ref{maincor}). While the proof in \cite{AEJO} relies on zigzagging a fixed free resolution, our proof has to take into consideration the extra dimension of the general problem, by simultaneously modifying the initial $\mathcal S$-filtrations of all the terms in the resolution. The key tool making these modifications  possible is the Hill Lemma concerning filtrations of modules.  

In Section 2, we deal with the phenomenon of non-deconstructibility, and non-precovering, for classes closed under transfinite extensions. We present a different proof of the main result from \cite{HT} using trees on cardinals and their decoration by Bass modules. The point is that this proof works in the general setting of classes of locally $\mathcal F$-free modules. We thus show that there are many other instances of non-precovering, and hence non-deconstructible classes in ZFC, coming from the existence of non-$\sum$-pure-split tilting modules $T$ (Theorem \ref{tiltprec}). The case of flat Mittag-Leffler modules studied recently in \cite{BS}, \cite{HT} and \cite{SaT} is just the particular instance of $T = R$ where $R$ is a non-right perfect ring. But the phenomenon spans much further, to all countable hereditary artin algebras $A$ of infinite representation type: we show that the class of all locally Baer $A$-modules is not precovering (Corollary \ref{locbaernprec}).         

\section*{Preliminaries} 

In what follows, $R$ denotes an associative ring with $1$, and $\rmod R$ the category of all (right $R$-) modules. We will use the notation $\rfmod R$ to denote the class of all \emph{strongly finitely presented} modules, i.e., the modules possessing a projective resolution consisting of finitely presented projective modules. 

For a class of modules $\mathcal C \subseteq \rmod R$, we define the infinite Ext-orthogonal classes by ${}^{\perp_\infty} \mathcal C = \bigcap_{1 \leq i}\mbox{Ker}\Ext iR{-}{\mathcal C}$ and $\mathcal C ^{\perp_\infty} = \bigcap_{1 \leq i}\mbox{Ker}\Ext iR{\mathcal C}{-}$, and the Tor-orthogonal class by $\mathcal C ^\intercal = \mbox{Ker}\Tor R1{\mathcal C}{-}$. Similarly, ${}^\intercal \mathcal D = \mbox{Ker}\Tor R1{-}{\mathcal D}$ for each class of left $R$-modules $\mathcal D$. A pair $(\mathcal A, \mathcal B)$ of classes of modules is a \emph{cotorsion pair}, provided that $\mathcal B = \mathcal A ^\perp$ and $\mathcal A = {}^\perp \mathcal B$.        

A module $T$ is \emph{tilting}, provided $T$ has finite projective dimension, $\Ext iRT{T^{(I)}} = 0$ 
for each $i \geq 1$ and each set $I$, and there exist a $k < \omega$ and an exact sequence $0 \to R \to T_0 \to \dots \to T_k \to 0$ such that $T_i \in \Add T$ for each $i \leq k$ (here, $\Add T$ denotes the class of all direct summands of direct sums of copies of $T$). Each tilting module induces a \emph{tilting cotorsion pair} $(\mathcal A,\mathcal B)$ where $\mathcal B = T^{\perp_\infty}$. The class $\mathcal B$ is called the \emph{tilting class} induced by $T$. 
By \cite[13.46]{GT}, each tilting class $\mathcal B$ is of finite type, that is, $\mathcal B = \mathcal S ^\perp$ for $\mathcal S = \mathcal A \cap \rfmod R$. 

There is another cotorsion pair associated with $T$, namely $(\bar{\mathcal A}, \bar{\mathcal B})$ where $\bar{\mathcal A} = {}^\intercal(\mathcal S ^\intercal) = \varinjlim \mathcal S$ is the closure of $\mathcal S$ under direct limits. This cotorsion pair is called the \emph{closure} of $(\mathcal A,\mathcal B)$. The two cotorsion pairs coincide, iff $T$ is \emph{$\sum$-pure-split}, that is, each pure embedding $T_1 \hookrightarrow T_2$ with $T_1, T_2 \in \Add T$ splits, \cite[13.55]{GT}. 

A precovering class of modules $\mathcal C$ is called \emph{special precovering} provided that each module $M$ has a $\mathcal C$-precover $f : C \to M$ which is surjective and satisfies $\mbox{Ker} (f) \in {}^\perp \mathcal C$. Moreover, $\mathcal C$ is called \emph{covering} provided that each module $M$ has a $\mathcal C$-precover $f : C \to M$ with the following minimality property:  $g$ is an automorphism of $C$, whenever $g : C \to C$ is an endomorphism of $C$ with $fg = f$. Such $f$ is called a \emph{$\mathcal C$-cover} of $M$. Dually, we define the notions of a \emph{preenveloping}, \emph{special preenveloping}, and \emph{enveloping} class of modules. 

We note that the class $\mathcal A$ above is special precovering, $\bar{\mathcal A}$ is covering, $\mathcal B$ special preenveloping, and $\bar{\mathcal B}$ enveloping, cf.\ \cite{EJ} or \cite{GT}.

For example, each projective generator $T$ is tilting, and $T$ is $\sum$-pure-split, iff the ring $R$ is right perfect, by a classic theorem by Bass \cite[28.4]{AF}. The two associated cotorsion pairs here are $(\mathcal P _0,\rmod R)$ and $(\mathcal F_0, \mathcal E)$, where $\mathcal E$ is the class of all Enochs cotorsion modules. 

\medskip
For a class of modules $\mathcal C$, we will denote by $\Filt (\mathcal C)$ the class of all $\mathcal C$-filtered modules. The key fact about this class is

\begin{lemma}\label{filt} Let $\mathcal S$ be a set of modules. Then $\Filt (\mathcal S)$ is a precovering class. 
\end{lemma}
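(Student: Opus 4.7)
The plan is to construct the precover in two stages, bootstrapping from the existence of special $\mathcal S^\perp$-preenvelopes guaranteed by the Eklof--Trlifaj theorem.

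First, I would reduce to the case when $M$ is generated by $\Filt(\mathcal S)$. Let $M_\mathcal S := \sum \{ \mathrm{im}(g) : D \in \Filt(\mathcal S),\, g \dd D \to M \} \subseteq M$; any $\Filt(\mathcal S)$-map into $M$ automatically factors through the inclusion $M_\mathcal S \hookrightarrow M$, so a $\Filt(\mathcal S)$-precover of $M_\mathcal S$ composed with this inclusion gives one of $M$. Replacing $M$ by $M_\mathcal S$, fix a regular cardinal $\kappa$ with $|S| < \kappa$ for all $S \in \mathcal S$. By Hill's lemma, every $D \in \Filt(\mathcal S)$ is the directed union of its $<\kappa$-presented $\mathcal S$-filtered submodules drawn from the Hill family, so $M$ is already generated by images of maps $T \to M$ with $T$ running through a set $\mathcal T$ of representatives of $<\kappa$-presented $\mathcal S$-filtered modules. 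Putting $F := \bigoplus_{T \in \mathcal T,\, \psi \in \Hom R T M} T$ and $\phi_0 := \sum \psi$ yields a surjection $\phi_0 \dd F \twoheadrightarrow M$ with $F \in \Filt(\mathcal S)$.

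Next, I would make the kernel $\mathcal S$-orthogonal. Let $K := \ker \phi_0$. The Eklof--Trlifaj theorem produces a special $\mathcal S^\perp$-preenvelope
\[
0 \longrightarrow K \longrightarrow N \longrightarrow L \longrightarrow 0
\]
with $N \in \mathcal S^\perp$ and, crucially, with $L \in \Filt(\mathcal S)$. Forming the pushout of $K \hookrightarrow F$ and $K \hookrightarrow N$ gives a module $C$ with a surjection $\phi \dd C \twoheadrightarrow M$ whose kernel is $N$, together with a short exact sequence $0 \to F \to C \to L \to 0$. Since $F, L \in \Filt(\mathcal S)$, also $C \in \Filt(\mathcal S)$ by Eklof's lemma.

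Finally, to verify that $\phi$ is a $\Filt(\mathcal S)$-precover, take any $g \dd D \to M$ with $D \in \Filt(\mathcal S)$. Pulling back along $\phi$ yields a short exact sequence $0 \to N \to D \times_M C \to D \to 0$, which splits because $\Ext 1 R D N = 0$ by Eklof's lemma applied to $D \in \Filt(\mathcal S)$ and $N \in \mathcal S^\perp$. A splitting, composed with the projection $D \times_M C \to C$, produces the required factorization $h \dd D \to C$ of $g$ through $\phi$. The main technical input, and principal obstacle, is that the Eklof--Trlifaj preenvelope can be chosen with cokernel genuinely in $\Filt(\mathcal S)$ (and not merely in the larger class ${}^\perp(\mathcal S^\perp)$); this rests on the closure of $\Filt(\mathcal S)$ under transfinite extensions.
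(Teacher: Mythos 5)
Your proof is correct and is essentially the argument behind the result the paper cites here (\cite[2.15]{SaS}, see also \cite{GT}) --- the paper itself offers no proof beyond that citation: one surjects onto the trace of $\Filt(\mathcal S)$ from a filtered module, repairs the kernel by an Eklof--Trlifaj special $\mathcal S^\perp$-preenvelope whose cokernel is genuinely $\mathcal S$-filtered, forms the pushout, and concludes with Eklof's lemma. The one quibble is terminological: the step $C \in \Filt(\mathcal S)$ from $0 \to F \to C \to L \to 0$ uses closure of $\Filt(\mathcal S)$ under extensions (transitivity of filtrations), not Eklof's lemma, which you then correctly invoke only for $\Ext 1RDN = 0$.
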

\begin{proof} This has been proved in \cite[2.15]{SaS} (see also \cite[7.21]{GT}). 
\end{proof}

Assume that $M \in \Filt (\mathcal C)$, and $\mathcal C$ is a class of $< \kappa$-presented modules for an infinite regular cardinal $\kappa$. 
Let $\mathcal M = ( M_\alpha \mid \alpha \leq \sigma )$ be a $\mathcal C$-filtration of $M$. Then $\mathcal M$ can be expanded into a family 
of submodules of $M$ with the following remarkable properties:
 
\begin{lemma}\label{hill}(Hill Lemma) There is a family $\mathcal H$ consisting of submodules of $M$ such that
\begin{itemize}
\item[(H1)] $\mathcal M \subseteq \mathcal H$;
\item[(H2)] $\mathcal H$ is a complete distributive sublattice of the modular lattice of all submodules of $M$;
\item[(H3)] If $N,P \in \mathcal H$ are such that $N \subseteq P$, then the module $P/N$ is $\mathcal C$-filtered; 
\item[(H4)] Let $N \in \mathcal H$ and $X$ be a subset of $M$ of cardinality $<\kappa$.  
Then there is a $P \in \mathcal H$ such that $N \cup X \subseteq P$ and $P/N$ is $<\kappa$-presented.
\end{itemize}
\end{lemma}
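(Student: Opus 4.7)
The plan is to index $\mathcal{H}$ by suitable ``closed'' subsets of $\sigma$. For each $\alpha < \sigma$, fix generators $\{a_{\alpha,i} + M_\alpha \mid i < \lambda_\alpha\}$ of $M_{\alpha+1}/M_\alpha$ with $\lambda_\alpha < \kappa$, and lifts $a_{\alpha,i} \in M_{\alpha+1}$; then $M_\alpha = \sum_{\beta < \alpha,\, j} a_{\beta,j}R$ by transfinite induction. Using that $M_{\alpha+1}/M_\alpha$ is $<\kappa$-presented, fix also a set of $<\kappa$ defining relations, each of which rewrites a prescribed $R$-combination of the $a_{\alpha,i}$'s as some specific element $m \in M_\alpha$, and expand $m$ as a finite combination of the $a_{\beta,j}$'s with $\beta < \alpha$. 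Collecting the $\beta$'s across all chosen relations produces a set $B(\alpha) \subseteq \alpha$ of cardinality $<\kappa$. Call $T \subseteq \sigma$ \emph{closed} if $B(\alpha) \subseteq T$ for every $\alpha \in T$, set $N(T) = \sum_{\alpha \in T,\, i < \lambda_\alpha} a_{\alpha,i} R$, and define $\mathcal{H} = \{N(T) \mid T \text{ closed}\}$.

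Property (H1) is then immediate, since every initial segment $T = \{\beta < \alpha\}$ is closed with $N(T) = M_\alpha$. The closed subsets of $\sigma$ form a complete distributive sublattice of $\mathcal{P}(\sigma)$ under $\cap$ and $\cup$, and $T \mapsto N(T)$ manifestly preserves unions; the delicate point in (H2) is to show that it also preserves intersections, i.e.\ $N(T_1) \cap N(T_2) = N(T_1 \cap T_2)$ for closed $T_1, T_2$. This is proved by induction on the largest index $\alpha$ of a generator $a_{\alpha,i}$ appearing in some representation of $x \in N(T_1) \cap N(T_2)$ via generators from $T_1$: if $\alpha \notin T_2$, the defining relations at $\alpha$ (together with the closure $B(\alpha) \subseteq T_1$) allow one to replace the $a_{\alpha,i}$-terms by a combination of $a_{\beta,j}$'s with $\beta \in B(\alpha) \subseteq T_1 \cap \alpha$, strictly lowering the maximum. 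The distributivity of $\mathcal{H}$ then transfers from the distributivity of the lattice of closed subsets of $\sigma$ via $T \mapsto N(T)$.

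For (H3), when $T_1 \subseteq T_2$ are closed, well-order $T_2 \setminus T_1$ by ordinal order and take the resulting chain of intermediate closed sets; the intersection formula of (H2) identifies each successive factor in the induced filtration of $N(T_2)/N(T_1)$ with some $M_{\alpha+1}/M_\alpha \in \mathcal{C}$. For (H4), each $x \in X$ lies in some $M_{\alpha_x + 1}$ and hence involves only finitely many generator indices $\beta \leq \alpha_x$; adjoining these to $T$ and iteratively closing under $B(\cdot)$, the regularity of $\kappa$ produces a closed $T' \supseteq T$ with $|T' \setminus T| < \kappa$, whereupon $N(T')/N(T)$ is $<\kappa$-presented by (H3). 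The principal obstacle is the intersection identity in (H2): it is precisely what forces the carefully engineered choice of the closure function $B(\cdot)$, and everything else in the lemma is essentially a bookkeeping consequence of this identity together with the fact that closed subsets of $\sigma$ form a complete distributive sublattice of $\mathcal{P}(\sigma)$.
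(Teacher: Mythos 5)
Your construction is essentially the one the paper itself uses (following \cite[7.21]{GT}): setting $A_\alpha=\sum_{i<\lambda_\alpha}a_{\alpha,i}R$, your condition $B(\alpha)\subseteq T$ is just a concrete sufficient form of the paper's closedness condition $M_\alpha\cap A_\alpha\subseteq\sum_{\beta\in S,\,\beta<\alpha}A_\beta$, and the rest (indexing $\mathcal H$ by closed sets, deriving (H1)--(H4) from the intersection identity) is the standard argument. The only point to tighten is the induction for $N(T_1)\cap N(T_2)=N(T_1\cap T_2)$: in the case $\alpha\notin T_2$ you may apply the relations at $\alpha$ only after knowing that the $\alpha$-component of $x$ lies in $M_\alpha$, which requires first reducing the $T_2$-side representation as well --- e.g.\ by proving the auxiliary identity $N(S)\cap M_\gamma=N(S\cap\gamma)$ for closed $S$ by the same rewriting, or by running the induction on the maxima of both representations simultaneously; with that adjustment (and noting that for (H4) one also uses that a $\mathcal C$-filtration of length $<\kappa$ with $<\kappa$-presented factors yields a $<\kappa$-presented module) the proof is correct.
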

\begin{proof} Consider a family of $<\kappa$-generated modules $(A_\alpha \mid \alpha < \sigma)$, such that for each $\alpha <
\sigma$, we have $ M_{\alpha + 1} = M_\alpha + A_\alpha$. We call a subset $S$ of $\sigma$ is \emph{closed},
provided that each $\alpha \in S$ satisfies $M_\alpha \cap A_\alpha \subseteq \sum_{\beta \in S, \beta<\alpha} A_\beta$.

Let $\mathcal H = \{ \sum_{\alpha \in S} A_\alpha \mid S \hbox{ a closed subset of } \sigma\}$. Then 
$\mathcal H$ satisfies (H1)-(H4) by \cite[7.21]{GT}. 
\end{proof}

For more details and further properties of the notions defined above, we refer to \cite{EJ} and \cite{GT}.  
 
\section{Deconstructibility for $\mbox{Filt}(\mathcal S )$-resolved modules}

By \cite[4.1]{AEJO} (see also \cite[II.3.2]{RG}), the class $\mathcal P _n$ is deconstructible for each $n \geq 0$. Here $\mathcal P _n$ denotes the class of all modules of projective dimension $\leq n$ (= the modules of $\mathcal P _0$-resolution dimension $\leq n$). 

The aim of this section is to establish a more general result which replaces $\mathcal P _0$ by an arbitrary deconstructible class of modules. In fact, Theorem \ref{filt-deconstr} below even shows that the resolutions need not have finite length, and their elements need not belong to the same class.

\begin{definition}
Let $\mathcal C$ be a class of modules and $M$ a module. A long exact sequence
$$ \cdots \longrightarrow C_{i+1} \longrightarrow C_i \longrightarrow \cdots \longrightarrow C_1 \longrightarrow C_0 \longrightarrow M \longrightarrow 0$$
with $C_i \in \mathcal C$ for each $i<\omega$ is called a \emph{$\mathcal C$-resolution} of $M$.

In analogy to the projective case, a module $M$ is said to have \emph{$\mathcal C$-resolution dimension $\leq n$}, provided it possesses a $\mathcal C$-resolution of length $\leq n$.
\end{definition}

\begin{definition}
Let $R$ be a ring and $\kappa$ a cardinal. Then $R$ is called \emph{right $\kappa$-noetherian}, provided that each right ideal $I$ of $R$ is $\leq \kappa$-generated.
The least infinite cardinal $\kappa$ such that $R$ is right $\kappa$-noetherian is the \emph{right dimension of $R$}, denoted by $\dim(R)$. 

For example, if $R$ is right noetherian, then $\dim(R) = \aleph_0$.
\end{definition}

The following Lemma is well-known (see e.g.\ \cite[6.31]{GT}):

\begin{lemma}
\label{dimenze}
Let $\kappa$ be a cardinal such that $\kappa \geq \dim(R)$. Then each submodule of a $\leq \kappa$-generated module is $\leq \kappa$-generated.
In particular, each $\leq\kappa$-generated module is $\leq\kappa$-presented.
\end{lemma}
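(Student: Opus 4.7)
The plan is to reduce to the case of free modules $R^{(\lambda)}$ with $\lambda \le \kappa$ and then proceed by transfinite induction on $\lambda$. For the reduction, given any $\le\kappa$-generated module $M$, choose a surjection $\pi \colon R^{(\kappa)} \to M$; any submodule $N \subseteq M$ is the image of $\pi^{-1}(N) \subseteq R^{(\kappa)}$, so a $\le\kappa$-generating set of $\pi^{-1}(N)$ maps to one of $N$. It thus suffices to show that every submodule of $R^{(\lambda)}$ is $\le\kappa$-generated for each cardinal $\lambda \le \kappa$.

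For the base case $\lambda = 1$, submodules of $R$ are right ideals, which are $\le\kappa$-generated by the hypothesis $\kappa \ge \dim(R)$. For $\lambda = n+1$ finite, I would use the short exact sequence $0 \to R^{n} \to R^{n+1} \to R \to 0$: any submodule $N \subseteq R^{n+1}$ fits into $0 \to N \cap R^{n} \to N \to I \to 0$ where $I$ is a right ideal of $R$; the outer terms are $\le\kappa$-generated by induction and by hypothesis respectively, and since $\kappa$ is infinite, $N$ itself is $\le\kappa$-generated.

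For infinite $\lambda \le \kappa$, I would use the continuous chain $R^{(\lambda)} = \bigcup_{\alpha < \lambda} R^{(\alpha)}$ and intersect with $N$ to obtain $N = \bigcup_{\alpha < \lambda}(N \cap R^{(\alpha)})$. Each $R^{(\alpha)}$ has rank $|\alpha| < \lambda \le \kappa$, so the inductive hypothesis applies at each stage, yielding $\le\kappa$ generators for every $N \cap R^{(\alpha)}$; the union of these generating sets has cardinality at most $\lambda \cdot \kappa = \kappa$ and generates $N$. For the final clause, if $M$ is $\le\kappa$-generated then $M \cong R^{(\kappa)}/K$, and the kernel $K$ is $\le\kappa$-generated by the first part, exhibiting $M$ as $\le\kappa$-presented. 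The one point requiring care throughout is the cardinal bookkeeping in the infinite step, making sure the inductive hypothesis is available at each stage (since $|\alpha| < \lambda$ for $\alpha < \lambda$ when $\lambda$ is a cardinal) and that the union of generating sets does not exceed $\kappa$; beyond this there is no real conceptual obstacle once the reduction to free modules has been made.
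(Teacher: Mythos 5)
Your proof is correct; the paper itself gives no argument, merely citing \cite[6.31]{GT}, and your reduction to free modules followed by induction on the rank (with the base case handled by $\kappa \geq \dim(R)$ and the infinite case by intersecting $N$ with the continuous chain $R^{(\lambda)} = \bigcup_{\alpha<\lambda} R^{(\alpha)}$) is precisely the standard argument behind that reference. The cardinal bookkeeping you flag is handled correctly, so there is nothing to add.
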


We can now prove the main result of this section:

\begin{theorem}
\label{filt-deconstr}
Let $\kappa$ be a cardinal such that $\kappa \geq \dim(R)$, $M$ a module and $\mathcal S_1, \mathcal S_2, \dots$ sets of \leqk-presented modules.
Assume that there is a long exact sequence
$$ \cdots \sipka{f_{n+1}} D_n \sipka{f_n} \cdots \sipka{f_2} D_1 \sipka{f_1} D_0 \sipka{f_0} M \longrightarrow 0, \leqno \quad\mathcal R: $$
with $D_i \in \Filt(\mathcal S_i)$ for all $i<\omega$.
Then there is a filtration $(M_\alpha \mid \alpha \leq \lambda)$ of $M$ such that for every $\alpha < \lambda$, there is a long exact sequence
$$ \cdots \sipka{\ol f_{\alpha,n+1}} \ol D_{\alpha,n} \sipka{\ol f_{\alpha,n}} \cdots \sipka{\ol f_{\alpha,2}} \ol D_{\alpha,1} \sipka{\ol f_{\alpha,1}} \ol D_{\alpha,0} \sipka{\ol f_{\alpha,0}} M_{\alpha+1}/M_\alpha \longrightarrow 0 \leqno \quad \ol{\mathcal R}_\alpha : $$
with $\ol D_{\alpha,n} \in \Filt(\mathcal S_n)^{\leq\kappa}$ and $M_{\alpha+1}/M_\alpha$ \leqk-presented.
\end{theorem}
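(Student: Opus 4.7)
The plan is to apply the Hill Lemma to every term of the resolution and then build the desired filtration of $M$ by a transfinite induction whose successor step is an $\omega$-stage zigzag simultaneously expanding the chosen subobjects at every level. Concretely, for each $n$ fix an $\mathcal{S}_n$-filtration of $D_n$ and let $\mathcal{H}_n$ be the corresponding Hill family from Lemma \ref{hill}. The aim is to construct, by transfinite induction on $\alpha$, a coherent system of submodules $N_{\alpha, n} \in \mathcal{H}_n$ satisfying $N_{0, n} = 0$, continuity at limit ordinals (automatic from (H2)), $f_n(N_{\alpha, n}) \subseteq N_{\alpha, n-1}$ and $\ker(f_{n-1} \hrp N_{\alpha, n-1}) = f_n(N_{\alpha, n})$ for $n \geq 1$, and $N_{\alpha+1, n}/N_{\alpha, n}$ being $\leq\kappa$-presented. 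Setting $M_\alpha := f_0(N_{\alpha, 0})$, the short exact sequence of chain complexes $0 \to N_{\alpha, \bullet} \to N_{\alpha+1, \bullet} \to \ol D_{\alpha, \bullet} \to 0$ together with the long exact sequence of homology will then deliver the required resolution $\ol{\mathcal R}_\alpha$ of $M_{\alpha+1}/M_\alpha$ with $\ol D_{\alpha, n} := N_{\alpha+1, n}/N_{\alpha, n} \in \Filt(\mathcal{S}_n)^{\leq\kappa}$; membership in $\Filt(\mathcal{S}_n)$ follows from (H3). The induction terminates when $N_{\lambda, 0} = D_0$, arranged by enumerating $D_0$ along a fixed well-ordering and feeding in a fresh element at each successor.

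The successor step is the heart of the argument. Given $N_{\alpha, n}$ and a new element $x \in D_0 \setminus N_{\alpha, 0}$, I would construct $P_n^{(k)} \in \mathcal{H}_n$ for $k < \omega$ and set $N_{\alpha+1, n} := \bigcup_{k < \omega} P_n^{(k)}$, which remains in $\mathcal{H}_n$ by (H2). Initialize $P_n^{(0)} = N_{\alpha, n}$ for $n \geq 1$, and use (H4) to obtain $P_0^{(0)} \in \mathcal{H}_0$ containing $N_{\alpha, 0} \cup \{x\}$ with $P_0^{(0)}/N_{\alpha, 0}$ being $\leq\kappa$-presented. Going from stage $k$ to $k+1$, at each level $n$ assemble a $\leq\kappa$-sized set $X_n \subseteq D_n$ consisting of (a) a generating set modulo $P_n^{(k)}$ of the submodule $f_{n+1}(P_{n+1}^{(k)})$ of $D_n$, and (b) $f_n$-preimages of a generating set modulo $f_n(N_{\alpha, n})$ of $\ker(f_{n-1} \hrp P_{n-1}^{(k)})$, which exist by the exactness of $\mathcal R$. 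Applying (H4) then produces $P_n^{(k+1)} \in \mathcal{H}_n$ containing $P_n^{(k)} \cup X_n$ with $P_n^{(k+1)}/P_n^{(k)}$ being $\leq\kappa$-presented.

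The crucial cardinality bookkeeping is that both types of generators in $X_n$ can be chosen to be $\leq\kappa$-sized. For (a), the relevant quotient is a quotient of $P_{n+1}^{(k)}/N_{\alpha, n+1}$, which is $\leq\kappa$-presented by the running hypothesis. For (b), the quotient $\ker(f_{n-1} \hrp P_{n-1}^{(k)})/f_n(N_{\alpha, n})$ embeds into $P_{n-1}^{(k)}/N_{\alpha, n-1}$, since $\ker(f_{n-1} \hrp P_{n-1}^{(k)}) \cap N_{\alpha, n-1} = \ker(f_{n-1} \hrp N_{\alpha, n-1}) = f_n(N_{\alpha, n})$ by the outer inductive hypothesis; Lemma \ref{dimenze}, which requires exactly $\kappa \geq \dim(R)$, then ensures that this submodule of a $\leq\kappa$-generated module is itself $\leq\kappa$-generated. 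Iterating, $N_{\alpha+1, n}/N_{\alpha, n}$ is a countable union of $\leq\kappa$-presented pieces, hence $\leq\kappa$-generated, hence $\leq\kappa$-presented again by Lemma \ref{dimenze}.

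The main obstacle is orchestrating this zigzag across countably many resolution levels: each correction at one level introduces new data that must be absorbed at the two adjacent levels without breaking the $\leq\kappa$-presentability bounds or leaving $\mathcal{H}_n$. Precisely this is what $\kappa \geq \dim(R)$, Lemma \ref{dimenze}, and the closure properties (H2)-(H4) jointly afford. Verification that $N_{\alpha+1, \bullet}$ meets all inductive demands is then routine: every cycle in $\bigcup_k P_{n-1}^{(k)}$ already lives at some finite stage and is hit from $P_n^{(k+1)} \subseteq N_{\alpha+1, n}$ by construction (b), while $f_n(P_n^{(k)}) \subseteq P_{n-1}^{(k+1)} \subseteq N_{\alpha+1, n-1}$ by construction (a).
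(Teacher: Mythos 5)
Your proposal is correct and follows essentially the same route as the paper's proof: Hill families on each term of the resolution, a transfinite induction with an $\omega$-stage zigzag at successors that alternately enlarges images and absorbs kernels, and the same cardinality bookkeeping via the identity $\ker(f_{n-1}\hrp P)\cap N=\ker(f_{n-1}\hrp N)$ together with Lemma \ref{dimenze}. The only cosmetic differences are that you run all resolution levels simultaneously at each stage (the paper uses a triangular scheme of ``left'' and ``right'' modules) and you enumerate $D_0$ rather than a generating set of $M$.
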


\begin{proof}
\def\lD#1#2{{\displaystyle D_{#1,#2}^\leftarrow}}
\def\rD#1#2{{\displaystyle D_{#1,#2}^\rightarrow}}
\def\lrD#1#2{{\displaystyle D_{#1,#2}^\leftrightarrow}}
\def\Ker{\operatorname{Ker}}
Since $\kappa \geq \dim(R)$, Lemma \ref{dimenze} implies that the notions of a \leqk-presented and a \leqk-generated module coincide.

Let $\lambda = \kappa + \varrho$, where $\varrho$ is the minimal number of generators of $M$, and let $\{m_\alpha \mid \alpha<\lambda\}$ be a generating set of $M$. We will inductively construct a continuous chain of long exact sequences $({\mathcal R}_\alpha \mid \alpha\leq\lambda)$ of the form
$$ \cdots \sipka{f_{n+1}\hrp D_{\alpha,n+1}} D_{\alpha,n} \sipka{f_n \hrp D_{\alpha,n}} \cdots \sipka{f_2 \hrp D_{\alpha,2}} D_{\alpha,1} \sipka{f_1\hrp D_{\alpha,1}} D_{\alpha,0} \sipka{f_0\hrp D_{\alpha,0}} M_\alpha \longrightarrow 0 \leqno \quad {\mathcal R}_\alpha: $$
with $D_{\alpha,n} \in \Filt(\mathcal S_n)$ and ${\mathcal R}_{\alpha+1}/{\mathcal R}_{\alpha} = \ol{\mathcal R}_{\alpha}$.

Denote by $\mathcal H_i$ the family of submodules of $D_i$ obtained from an $\mathcal S_i$-filtration of $D_i$ using the Hill Lemma \ref{hill}; we shall pick the elements of chains $(D_{\alpha,i} \mid \alpha \leq \lambda)$ from these families.

Put $M_0 = 0$ and $D_{0,i} = 0$ for every $i<\omega$ as well. Assume that $M_\alpha$ and $\mathcal R_\alpha$ are already constructed, $D_{\alpha,i} \in \mathcal H_i$ for each $i<\omega$ and $M_\alpha \neq M$. Let $\gamma < \lambda$ be the least index such that $m_\gamma \notin M_\alpha$ (this ensures that $M = \bigcup_{\alpha<\lambda} M_\alpha$).

To begin the construction of $\mathcal R_{\alpha+1}$, choose a $d \in D_0$ with $f_0(d) = m_\gamma$. Property (H4) from Lemma \ref{hill} gives a module $\lD 01 \in \mathcal H_0$ such that $\lD 01 \supseteq D_{\alpha,0} \cup \{d\}$ and $\lD 01 / D_{\alpha,0}$ is \leqk-presented.

Since
$$ \Ker( f_0 \hrp D_{\alpha,0} ) = \Ker( f_0 \hrp \lD 01 ) \cap D_{\alpha,0}, $$
we see that
$$
\Ker( f_0 \hrp \lD 01 ) / \Ker( f_0 \hrp D_{\alpha,0} ) \cong
( \Ker( f_0 \hrp \lD 01 ) + D_{\alpha,0} ) / D_{\alpha,0} \subseteq \lD 01 / D_{\alpha,0}.
$$
As the last module is \leqk-generated, the middle one (and therefore the first one) is \leqk-generated as well by Lemma \ref{dimenze}. Let $G \subseteq \lD 01$ be a set satisfying $|G| \leq \kappa$ and $\Ker( f_0 \hrp D_{\alpha,0}) + \langle G \rangle = \Ker( f_0 \hrp \lD 01)$. Then, because of exactness of $\mathcal R_\alpha$ at $D_{\alpha,0}$, there is a set $H \subseteq D_1$ such that $|H| \leq \kappa$ and $f_1(H) = G$. Hill's property (H4) yields a module $\lD 11 \in \mathcal H_1$ such that $\lD 11 \supseteq D_{\alpha,1} \cup H$ and $\lD 11 / D_{\alpha,1}$ is \leqk-presented. Clearly $f_1(\lD 11) \supseteq \Ker( f_0 \hrp \lD 01)$.

Further, put $\rD 11 = \lD 11$ and construct $\rD 01 \in \mathcal H_0$ so that $f_1(\lD 11) \subseteq \rD 01$ and $\rD 01 / \lD 01$ is \leqk-presented.

The construction proceeds in a similar way: if the modules $\rD 0n, \dots, \rD nn$ are constructed, we let $\lD 0{n+1} = \rD 0n$, and construct the modules $\lD 1{n+1}, \dots, \lD {n+1}{n+1}$ so that $\rD in \subseteq \lD i{n+1}$ and $f_{i+1}(\lD {i+1}{n+1}) \supseteq \Ker( f_i \hrp \lD i{n+1})$ for $i = 0, 1, \dots, n$ (observe, however, that $\lD {n+1}{n+1}$ has to be constructed from $D_{\alpha,n+1}$, as there is no $\rD {n+1}n$). Next, we put $\rD {n+1}{n+1} = \lD {n+1}{n+1}$, and choose $\rD n{n+1}, \dots,\rD 0{n+1}$ satisfying $f_{i+1}(\rD {i+1}{n+1}) \subseteq \rD i{n+1}$ for all $i = 0, 1, \dots, n$. In all these steps, the factors of the newly constructed modules by their submodules constructed earlier are \leqk-presented.

Let $D_{\alpha+1, i} = \bigcup_{i\leq j<\omega} \lD ij$ and $M_{\alpha+1} = f_0(D_{\alpha+1, 0})$. First observe that these modules, together with the restrictions of the maps $f_i$, form an exact sequence --- the ``$\leftarrow$'' steps of the construction ensure that the kernels are inside the images, whereas the ``$\rightarrow$'' steps take care of the inverse inclusion. Morover, as each module $D_{\alpha+1, i}$ is the union of a chain of modules from $\mathcal H_i$ with consecutive factors \leqk-presented, we conclude that $D_{\alpha+1, i} \in \mathcal H_i$ and $D_{\alpha+1, i} / D_{\alpha, i}$ is \leqk-presented for each $i<\omega$.

To see that $M_{\alpha+1}/M_\alpha$ is \leqk-generated, consider the following diagram with both rows exact:

$$\begin{CD}
0@>>>  D_{\alpha,0}@>>> D_{\alpha+1,0}@>>> {D_{\alpha+1,0}/D_{\alpha,0}}@>>>   0\\
@.     @V{f_0 \restriction D_{\alpha,0}}VV    @V{f_0 \hrp D_{\alpha+1,0}}VV @V{g}VV @.\\
0@>>>  {M_\alpha}@>>> {M_{\alpha+1}}@>{\pi}>>   {M_{\alpha+1}/M_\alpha}@>>>   0\\
\end{CD}$$

The map $g\colon D_{\alpha+1,0}/D_{\alpha,0} \to M_{\alpha+1}/M_\alpha$ is defined by $g(x + D_{\alpha,0}) = f_0(x) + M_\alpha$; $g$ is well-defined, because $x-y \in D_{\alpha,0}$ implies $f_0(x-y) \in M_\alpha$, and hence $f_0(x) + M_\alpha = f_0(y) + M_\alpha$. The diagram is easily checked to be commutative. Since both $f_0 \hrp D_{\alpha+1,0}$ and $\pi$ are epimorphisms, $g$ must be epic as well. The module $M_{\alpha+1}/M_\alpha$ is thus a homomorphic image of a \leqk-generated module $D_{\alpha+1,0}/D_{\alpha,0}$, hence it is \leqk-generated itself.

For a limit ordinal $\alpha \leq \lambda$, we put $D_{\alpha,i} = \bigcup_{\beta<\alpha}D_{\beta,i}$ and $M_\alpha = \bigcup_{\beta<\alpha}M_\beta$ and define the morphisms in $\mathcal R_\alpha$ as the corresponding restrictions. Such construction clearly yields that $\mathcal R_\alpha$ is exact, and by the property (H2), we infer that $D_{\alpha,i} \in \mathcal H_i$ for all $i<\omega$.

Since all the complexes $\mathcal R_\alpha$ are exact, the factor complexes $\ol{\mathcal R}_\alpha = \mathcal R_{\alpha+1}/\mathcal R_\alpha$ are also exact,  whence the complexes $\ol {\mathcal R}_\alpha$ have the desired properties.
\end{proof}

\begin{corollary}\label{maincor}
Let $\mathcal C$ be a deconstructible class of modules.
\begin{enumerate}
\item The class of all modules possessing a $\mathcal C$-resolution is deconstructible.
\item The class of all modules of $\mathcal C$-resolution dimension $\leq n$ is deconstructible for each $n<\omega$; in particular, the classes $\mathcal P_n$ are deconstructible.
\end{enumerate}
\end{corollary}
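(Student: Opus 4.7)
The plan is to reduce both parts to a single application of Theorem~\ref{filt-deconstr}. Using deconstructibility of $\mathcal C$, I first fix an infinite regular cardinal $\kappa \geq \dim(R)$ such that every module in $\mathcal C$ is $\mathcal S$-filtered, where $\mathcal S$ is a representative set of the $<\kappa$-presented members of $\mathcal C$. In particular $\mathcal C \subseteq \Filt(\mathcal S)$, so the terms of any $\mathcal C$-resolution automatically satisfy the hypothesis of Theorem~\ref{filt-deconstr} with input sets $\mathcal S_i = \mathcal S$.

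For part (1), given a module $M$ equipped with a $\mathcal C$-resolution $\cdots \to D_1 \to D_0 \to M \to 0$, I will apply Theorem~\ref{filt-deconstr} directly with $\mathcal S_i = \mathcal S$ for every $i<\omega$. The theorem then produces the desired filtration $(M_\alpha \mid \alpha \leq \lambda)$ of $M$ whose consecutive quotients $M_{\alpha+1}/M_\alpha$ are $\leq\kappa$-presented and each carries its own long exact resolution by $\leq\kappa$-presented modules in $\Filt(\mathcal S)$. Hence each quotient again admits a $\mathcal C$-resolution, so $M$ is filtered by $\leq\kappa$-presented members of the class of modules admitting a $\mathcal C$-resolution. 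This is precisely deconstructibility.

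For part (2), I will proceed identically but start from a resolution of length $\leq n$ padded with zeros, invoking Theorem~\ref{filt-deconstr} with $\mathcal S_i = \mathcal S$ for $i \leq n$ and $\mathcal S_i = \{0\}$ for $i > n$. Since $\Filt(\{0\}) = \{0\}$, the output terms $\overline D_{\alpha,i}$ vanish for $i > n$, so each quotient $M_{\alpha+1}/M_\alpha$ inherits a resolution of length $\leq n$ and therefore has $\mathcal C$-resolution dimension $\leq n$. The statement about $\mathcal P_n$ is then the specialization $\mathcal C = \mathcal P_0$, which is deconstructible by Kaplansky's theorem (every projective is a direct sum of $\leq\aleph_0$-presented projectives) and for which $\Filt(\mathcal P_0^{<\omega_1}) = \mathcal P_0$, so no further adjustment is needed.

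The only substantive ingredient is Theorem~\ref{filt-deconstr}; everything else is bookkeeping to feed the correct data $(\mathcal S_i)_{i<\omega}$ into it. The one point requiring mild care is the match between the output of the theorem (resolutions with terms in $\Filt(\mathcal S)$) and the hypothesis (a $\mathcal C$-resolution). This is transparent whenever $\mathcal C$ is already closed under transfinite extensions, as in the case of $\mathcal P_0$; in general one may pass to the $\Filt$-closure of $\mathcal C$ without altering the class of modules of $\mathcal C$-resolution dimension $\leq n$, so the argument is unaffected.
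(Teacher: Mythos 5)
Your proposal is correct and is essentially identical to the paper's own proof: both parts follow by feeding $\mathcal S_i = \mathcal S$ (resp.\ $\mathcal S_i = \mathcal S$ for $i \leq n$ and $\mathcal S_i = \{0\}$ for $i > n$) into Theorem~\ref{filt-deconstr}, with $\mathcal P_n$ as the special case $\mathcal C = \mathcal P_0$. The point you flag at the end --- matching $\mathcal C$-resolutions with $\Filt(\mathcal S)$-resolutions --- is handled in the paper by simply choosing $\mathcal S$ with $\mathcal C = \Filt(\mathcal S)$, i.e.\ by the same implicit identification of $\mathcal C$ with its $\Filt$-closure that you make explicit.
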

\begin{proof}
Let $\mathcal S \subseteq \ModR$ be a set such that $\mathcal C = \Filt(\mathcal S)$. The case (1) is obtained by taking $\mathcal S_i = \mathcal S$ in the Theorem \ref{filt-deconstr}, the case (2) by taking $\mathcal S_i = \mathcal S$ for $i \leq n$ and $\mathcal S_i = \{0\}$ for $n < i < \omega$. Since $\mathcal P_0$ is deconstructible (in fact, decomposable), the claim concerning $\mathcal P_n$ is just a special case of (2).
\end{proof}

\section{Locally $\mathcal F$-free modules}

From now on, $\mathcal F$ will denote a class of countably presented modules, and $\mathcal C$ the class of all countably $\mathcal F$-filtered modules. By Lemma \ref{hill}, each module $M \in \mathcal C$ has a $\mathcal C$-filtration of length $\leq \omega$. 

\begin{definition}\label{def_loc_free} Let $M$ be a module. Then $M$ is \emph{locally $\mathcal F$-free}, provided there exists a set $\mathcal S$ consisting of submodules of $M$ such that 
 
\begin{itemize}
\item[\rm{(S1)}] $\mathcal S \subseteq \mathcal C$;
\item[\rm{(S2)}] For each countable subset $C$ of $M$ there exists $S \in \mathcal S$ such that $C \subseteq S$;
\item[\rm{(S3)}] $0 \in \mathcal S$, and $\mathcal S$ is closed under unions of countable chains.
\end{itemize}

The set $\mathcal S$ is said to \emph{witness} the local $\mathcal F$-freeness of $M$. 
We will denote by $\mathcal L$ the class of all locally $\mathcal F$-free modules.
\end{definition}

If we view $\mathcal F$-filtered modules as the {\lq free\rq} ones, and the elements of $\mathcal L$ as the {\lq locally free\rq} modules, then the next lemma just says that {\lq free\rq} implies {\lq locally free\rq}, and the converse holds for countably generated modules:

\begin{lemma}\label{hillappl} Each $\mathcal F$-filtered module is locally $\mathcal F$-free. The class $\mathcal C$ coincides with the class of all countably generated locally $\mathcal F$-free modules. 
\end{lemma}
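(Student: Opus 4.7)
The plan is to deduce both statements from a single application of the Hill Lemma (Lemma~\ref{hill}) with $\kappa = \aleph_1$, which is legitimate because the members of $\mathcal F$ are countably presented and hence $<\aleph_1$-presented. I fix an $\mathcal F$-filtration of $M$ and let $\mathcal H$ be the resulting family of submodules. I then propose
$$ \mathcal S = \{ N \in \mathcal H \mid N \text{ is countably generated}\} $$
as a witnessing set for the local $\mathcal F$-freeness of $M$.

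The verification of (S1)--(S3) proceeds directly from the Hill properties. For (S1), any $N \in \mathcal S$ is $\mathcal F$-filtered by (H3) applied to $0 \subseteq N$; iterating (H4) along a countable generating set of $N$ produces a chain $0 = P_0 \subseteq P_1 \subseteq \cdots$ in $\mathcal H$ with union $N$ whose consecutive factors $P_{n+1}/P_n$ are countably presented and $\mathcal F$-filtered, so concatenating the $\mathcal F$-filtrations of these factors yields an $\mathcal F$-filtration of $N$ of length $\leq \omega$, placing $N \in \mathcal C$. Condition (S2) follows from the same iteration of (H4), starting from $0 \in \mathcal H$ and absorbing the elements of a countable subset $C \subseteq M$ one at a time; the union of the resulting chain belongs to $\mathcal H$ by (H2), is countably generated, and contains $C$. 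Condition (S3) is immediate: $0 = M_0 \in \mathcal H$ by (H1), while a countable union of members of $\mathcal S$ is again in $\mathcal H$ by (H2) and is still countably generated.

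For the second assertion, the forward inclusion is a direct consequence of the first: any $M \in \mathcal C$ is both countably generated (as a union of countably many countably generated pieces along its $\mathcal F$-filtration) and $\mathcal F$-filtered. Conversely, if $M$ is countably generated with witness set $\mathcal S$ and generators $\{x_n\}_{n<\omega}$, then (S2) produces some $S \in \mathcal S$ containing $\{x_n\}$; since $S \subseteq M$ this forces $S = M$, and hence $M \in \mathcal C$ by (S1).

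The main technical obstacle I anticipate lies in the last step of (S1): promoting a length-$\omega$ chain in $\mathcal H$ whose factors are merely countably presented and $\mathcal F$-filtered to an honest $\mathcal F$-filtration of length $\leq \omega$. This reduces to the subsidiary fact that any countably generated $\mathcal F$-filtered module admits a countable $\mathcal F$-filtration, which is itself obtained by a further application of Hill's Lemma to the factor module in question, together with its inherited filtration and a countable generating set.
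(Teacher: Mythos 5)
Your proposal is correct and follows essentially the same route as the paper: both take the Hill family $\mathcal H$ arising from an $\mathcal F$-filtration of $M$ and use its ``countable part'' as the witnessing set (the paper indexes $\mathcal S$ by countable closed subsets of $\sigma$, you take the countably generated members of $\mathcal H$ --- a cosmetic difference), and the second assertion is handled identically via (S1) and (S2). Your verification of (S1)--(S3) is in fact more detailed than the paper's, which simply asserts that $\mathcal S$ is a witnessing set; the only point to watch is that when iterating (H4) inside a fixed $N \in \mathcal H$ you should intersect the resulting modules with $N$ (using (H2)) so that the chain has union exactly $N$, a routine adjustment.
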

\begin{proof} Let $\mathcal M = ( M_\alpha \mid \alpha \leq \sigma )$ be an $\mathcal F$-filtration of $M$ and $A_\alpha$ ($\alpha < \sigma$) be countably generated modules such that $M_{\alpha + 1} = M_\alpha + A_\alpha$ for each $\alpha < \sigma$. Let $\mathcal H = \{ \sum_{\alpha \in S} A_\alpha \mid S \mbox{ closed in } \sigma \}$ be the family defined in the proof of Lemma \ref{hill}. We let $\mathcal S = \{ \sum_{\alpha \in S} A_\alpha \mid S \mbox{ countable and closed in } \sigma \}$. Then $\mathcal S$ witnesses the local $\mathcal F$-freeness of $M$. 

If $M$ is countably generated with a witnessing set $\mathcal S$ for local $\mathcal F$-freeness, then $M \in \mathcal C$ by conditions (S1) and (S2). 
\end{proof}

We will denote by $\varinjlim_{\omega} \mathcal F$ the class of all countable direct limits of the modules from $\mathcal F$, and by $\mathcal D$ the class of all direct summands of the modules $M$ that fit into an exact sequence $0 \to P \to M \to C \to 0$ where $P$ is a free module (i.e., $P \cong R^{(I)}$ for a set $I$) and $C \in \mathcal C$.  

\begin{example}\label{MLtilt} (i) If $\mathcal F$ is the class of all countably generated projective modules, then $\mathcal F$-filtered = projective, 
$\varinjlim_{\omega} \mathcal F$ is the class of all countably presented flat modules, and $\mathcal L$ is the class of all flat Mittag-Leffler modules, 
see \cite{HT} or \cite[\S 3.2]{GT}. 

(ii) Let $T$ be a countably generated tilting module and $\mathcal F = \{ T \}$. Then $\mathcal F$-filtered = isomorphic to a direct sum of copies of $T$. 

Similarly, if $T$ is a $\sum$-pure-injective tilting module, and $\mathcal F$ is a representative set of all indecomposable direct summands in a fixed indecomposable decomposition of $T$, then the class of all $\mathcal F$-filtered modules coincides with $\Add T$.

(iii) Let $R$ be a hereditary artin algebra of infinite representation type and $\mathcal F$ be a representative set of all finitely generated preprojective modules. Then the class of all $\mathcal F$-filtered modules coincides with the class of all \emph{Baer modules}, cf.\ \cite{AKT} or \cite[\S 14.3.2]{GT}. In this case, the modules in the class $\mathcal L$ will be called \emph{locally Baer}.   
\end{example}

In general the class $\mathcal L$ is not closed under direct summands: if $\mathcal F = \{ R \}$, where $R$ is the completely reducible ring $R = K \oplus K$ and $K$ is a field, then $\mathcal L$ is the class of all free modules, so $\mathcal L$ does not contain the projective module $K \oplus 0$. 

However, $\mathcal L$ is always closed under transfinite extensions:
 
\begin{theorem}\label{transf_closed} The class $\mathcal L$ is closed under transfinite extensions.
\end{theorem}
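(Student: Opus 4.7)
My plan is to prove the theorem by verifying that $\mathcal L$ is closed under the two basic building blocks of transfinite extensions --- short exact extensions (the successor step) and continuous unions (the limit step) --- whereupon a standard transfinite induction on the length of the filtration yields the result. The substantive case is the extension step, so I describe it in detail. Suppose $0 \to A \to M \to C \to 0$ is exact with $A, C \in \mathcal L$, witnessed by $\mathcal S^A$ and $\mathcal S^C$, and let $\pi : M \to C$ denote the projection. My candidate witness for $M$ is
$$
\mathcal S \;=\; \{\, N \leq M \mid N \cap A \in \mathcal S^A \text{ and } \pi(N) \in \mathcal S^C \,\}.
$$

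Property (S1) falls out immediately: any $N \in \mathcal S$ fits into a short exact sequence $0 \to N \cap A \to N \to \pi(N) \to 0$ with both outer terms in $\mathcal C$, so $N \in \mathcal C$ by closure of $\mathcal C$ under extensions (which itself follows from the fact that $\mathcal C$-modules are countably presented --- extensions of countably presented modules are countably presented, and this property survives the $\leq \omega$-length $\mathcal F$-filtrations that define $\mathcal C$). Property (S3) holds because intersection with $A$ and image under $\pi$ commute with unions of chains, and each of $\mathcal S^A$ and $\mathcal S^C$ is closed under countable chain unions. For property (S2), given a countable $D \subseteq M$, I proceed in three steps: (i) cover $\pi(D)$ by some $S_C \in \mathcal S^C$; (ii) lift a countable generating set of $S_C$ to a countable $L \subseteq M$ and set $N_1 = \langle D \cup L \rangle$, so that $\pi(N_1) = S_C$; (iii) cover $N_1 \cap A$ by some $\tilde A \in \mathcal S^A$ and put $N = N_1 + \tilde A$. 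The modular law then gives $N \cap A = (N_1 \cap A) + \tilde A = \tilde A$ and $\pi(N) = S_C$, so $N \in \mathcal S$ contains $D$.

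The main technical obstacle is step (iii): since $\mathcal S^A$ covers countable \emph{subsets}, I need $N_1 \cap A$ to be countably generated in order to apply (S2) to a countable generating set of it. This is a lifting-of-relations argument using that $S_C \in \mathcal C$ is countably presented: any surjection $R^{(\omega)} \twoheadrightarrow N_1$ composed with $\pi|_{N_1}$ yields a surjection onto $S_C$ whose kernel is countably generated (being the relations of a countably presented module), and the image of this kernel in $N_1$ is exactly $N_1 \cap A$. The limit step of the induction is then handled by a coherent choice of witness sets: at a limit ordinal of uncountable cofinality every countable subset of $M$ already lies in some $M_\alpha$ and one takes a nested union of witnesses, while at cofinality $\omega$ one picks a cofinal $\omega$-sequence and iterates the above extension construction, invoking closure of $\mathcal C$ under countable chain unions (which follows from the Hill Lemma applied to concatenated filtrations).
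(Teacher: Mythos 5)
Your successor step is essentially the paper's: the witness set $\{N\le M\mid N\cap A\in\mathcal S^A,\ \pi(N)\in\mathcal S^C\}$, the modular-law computation, and the use of countable presentability of $S_C$ to see that $N_1\cap A$ is countably generated all match the argument given there, and they are correct. The problem is the limit step. Your key justification there --- ``closure of $\mathcal C$ under countable chain unions'' --- is false, and its failure is precisely the phenomenon the rest of the paper is built on. Take $\mathcal F=\{R\}$ over a non-right-perfect ring: then $\mathcal C$ is the class of countably generated free modules, and a Bass module is a union of a countable chain of members of $\mathcal C$ that does not lie in $\mathcal C$ (nor even in $\mathcal L$). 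What the Hill Lemma gives you is closure of $\mathcal C$ under countable \emph{$\mathcal C$-filtrations}, i.e.\ under unions of chains whose \emph{consecutive quotients} lie in $\mathcal C$; for an arbitrary chain $S_0\subseteq S_1\subseteq\cdots$ with $S_i$ taken from witness sets of different $M_{\alpha_i}$, there is no control over $S_{i+1}/S_i$, so (S1) for the union is exactly what is at stake and cannot be waved through. For the same reason, $\mathcal L$ itself is not closed under continuous unions of chains of its members, so the bare ``extensions plus continuous unions'' induction scheme does not suffice.

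The missing ingredient is a coherence condition carried through the whole induction. The paper strengthens the inductive hypothesis to: for all $\gamma<\delta$ and all $S$ in the witness set $\mathcal S_\delta$, one has $S\cap M_\gamma\in\mathcal S_\gamma$ \emph{and} $S/(S\cap M_\gamma)\in\mathcal C$ (condition $(\ast)$). This must already be verified in the successor step (your construction does satisfy it, but you never state or propagate it). At a limit $\alpha$ one then defines $\mathcal S_\alpha$ as unions of chains $S_i\in\mathcal S_{\alpha_i}$, normalizes via $(\ast)$ to the case $S_i=S\cap M_{\alpha_i}$, and concludes $S\in\mathcal C$ because the consecutive quotients $S_{i+1}/(S_{i+1}\cap M_{\alpha_i})$ lie in $\mathcal C$ --- so $S$ is countably $\mathcal C$-filtered, which is where the concatenation/Hill argument legitimately enters. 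Finally, your treatment of (S3) at limits of cofinality $\omega$ (a union of unions) is absent; it needs a diagonal argument, again powered by $(\ast)$. Without $(\ast)$ the limit case of your induction does not close.
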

\begin{proof} 
Let $M$ be a module possessing an $\mathcal L$-filtration $(M_\alpha \mid \alpha \leq \lambda)$. By induction on $\alpha \leq \lambda$, we will construct the sets $\mathcal S_\alpha$ witnessing the local $\mathcal F$-freeness of $M_\alpha$ so that $\mathcal S_\gamma \subseteq \mathcal S_\delta$ for all  $\gamma\leq\delta\leq\lambda$, and the following condition \eqref{ast} is satisfied:
\begin{equation}
\vcenter{\advance\hsize-4\parindent\noindent If $\gamma < \delta \leq \lambda$ and $S \in \mathcal S _\delta$, then $S \cap M_\gamma \in \mathcal S_ \gamma$ and $S/(S \cap M_\gamma) \in \mathcal C$.} \tag{$\ast$}
\label{ast}
\end{equation}
The set $S_\lambda$ will then witness the $\mathcal F$-local freeness of $M = M_\lambda$. There is nothing to prove for $\alpha \leq 1$. In the inductive step,  
we distinguish two cases, depending on whether $\alpha$ is a successor or a limit ordinal.

\medskip
\noindent\emph{The successor case.} \,
Suppose that we have already constructed $\mathcal S_\alpha$. By assumption, $M_{\alpha +1}/M_\alpha \in \mathcal L$. Let  
$\pi : M_{\alpha +1} \to M_{\alpha +1}/M_\alpha$ be the projection, and $\bar{\mathcal S}_\alpha$ be a set witnessing 
the local $\mathcal F$-freeness of $M_{\alpha +1}/ M_\alpha$. Put
$$\mathcal S_{\alpha +1} = \bigl\{S \subseteq M_{\alpha +1} \mid S \cap M_\alpha \in \mathcal S _\alpha \mathrel\& \pi(S) \in \bar{\mathcal S}_\alpha \bigr\}.$$
Since $0 \in \bar{\mathcal S}_\alpha$, the inclusion $\mathcal S _\alpha \subseteq \mathcal S _{\alpha +1}$ is clear, so by the inductive hypothesis, $\mathcal S _\beta \subseteq \mathcal S _{\alpha +1}$ for all $\beta \leq \alpha +1$. 

Let $S \in \mathcal S _{\alpha +1}$. Then $S \cap M_\alpha \in \mathcal S _\alpha \subseteq \mathcal C$ and $S/(S \cap M_\alpha) \cong (S + M_\alpha)/M_\alpha = \pi(S) \in \bar{\mathcal S}_\alpha \subseteq \mathcal C$. This shows that condition ($\ast$) holds for $\gamma = \alpha$ and $\delta = \alpha +1$. Moreover, (S1) holds for $\mathcal S _{\alpha +1}$, because $\mathcal C$ is closed under extensions. 
Since $S \cap M_\gamma = (S \cap M_\alpha) \cap M_\gamma$ and there is an exact sequence $0 \to (S \cap M_\alpha)/(S \cap M_\gamma) \to S/(S \cap M_\gamma) \to S/(S \cap M_\alpha) \to 0$ for all $\gamma < \alpha$, the validity of condition ($\ast$) for all $\gamma < \delta \leq \alpha + 1$ now follows by the inductive premise.   

In order to prove condition (S2), consider a countable subset $C$ of $M_{\alpha +1}$. There exists $\bar{S} \in \bar{\mathcal S}_\alpha$ such that $\pi (C) \subseteq \bar{S}$. Since $\bar{S} \in \mathcal C$, $\bar{S}$ is countably presented, so there is a countably generated module $T \subseteq M_{\alpha +1}$ such that $\pi(T) = \bar{S}$, $C \subseteq T$, and $T \cap M_\alpha = \Ker{\pi \restriction T}$ is countably generated. Also, there exists an $S \in \mathcal S _\alpha$ satisfying $T \cap M_\alpha \subseteq S$. Then $T \cap S = T \cap M_\alpha$ and $(T + S) \cap M_\alpha = S$, and the exact sequence $0 \to S \hookrightarrow T+S \to \bar{S} \to 0$ yields $T+S \in \mathcal S _{\alpha +1}$. Since $C \subseteq T+S$, we have established (S2).

Finally, consider a chain $S_0 \subseteq \dots \subseteq S_i \subseteq S_{i+1} \subseteq \dots$ of elements of $\mathcal S _{\alpha +1}$. Let $S = \bigcup_{i<\omega} S_i$. Since $S_i \cap M_\alpha \in \mathcal S _\alpha$ for all $i<\omega$ and $S \cap M_\alpha = \bigcup_{i<\omega} (S_i \cap M_\alpha)$, we infer that $S \cap M_\alpha \in \mathcal S _\alpha$. Similarly, $\pi(S) = \bigcup_{i<\omega} \pi(S_i) \in \bar{\mathcal S}_\alpha$ as $\pi(S_i) \in \bar{\mathcal S}_\alpha$ for all $i<\omega$. Thus $S \in \mathcal S _{\alpha +1} $ and condition (S3) holds.

\medskip
\noindent\emph{The limit case.}\,
Let $\alpha \leq \lambda$ be a limit ordinal and assume that the systems $\mathcal S_\beta$ for $\beta<\alpha$ are already constructed.
Put
$$ \mathcal S _\alpha = \bigl\{ {\textstyle\bigcup_{i<\omega} S_i} \mid S_i \in \mathcal S _{\alpha_i} \text{ for some } \alpha_i < \alpha \text{ and }  S_i \subseteq S_{i+1}, \text{ for each } i<\omega  \bigr\}.$$
Then $\mathcal S _{\alpha}$ consists of submodules of $M_\alpha$, and the inclusion $\mathcal S _\beta \subseteq \mathcal S _\alpha$ is clear for each $\beta < \alpha$.

Consider an arbitrary $S = \bigcup_{i<\omega} S_i \in \mathcal S _\alpha$. We may assume that the sequence $(\alpha_i \mid i<\omega)$ is increasing. Let $\beta = \sup_{i < \omega} \alpha_i$ (so $S \subseteq M_\beta = \bigcup_{i<\omega} M_{\alpha_i}$). For each $i < \omega$, let $S_i^\prime = S \cap M_{\alpha_i} = \bigcup_{j<\omega} (S_j \cap M_{\alpha_i})$. Then $S_i^\prime \in \mathcal S_{\alpha_i}$ by condition ($\ast$) for $\gamma = \alpha_i$. Moreover, $S = \bigcup_{i<\omega} S_i^\prime$ and $S_i^\prime \cap M_{\alpha_j} = S_j^\prime$ for all $j \leq i < \omega$. Thus we can also assume that $S_i =  S \cap M_{\alpha_i} \in \mathcal S_{\alpha_i}$ for each $i < \omega$. 

Notice that by the inductive premise, we have $S \cap M_\gamma = \bigcup_{i < \omega} (S_i \cap M_\gamma) \in \mathcal S _\gamma$ for each $\gamma < \alpha$. If $\gamma \geq \beta$, then $S = S \cap M_\gamma$; otherwise, consider an $i < \omega$ such that $\gamma < \alpha_i$. Then $S \cap M_\gamma = (S \cap M_{\alpha_i}) \cap M_\gamma = S_i \cap M_\gamma$. 

Condition (S1) now follows from the fact that $S_{i+1}/S_i =  S_{i+1}/(S_{i+1} \cap M_{\alpha_i}) \in \mathcal C$ for each $i < \omega$, so $S$ is countably $\mathcal C$-filtered, whence $S \in \mathcal C$. 

Moreover, for $\gamma < \alpha_i$, the outer terms of the exact sequence $0 \to S_i/(S \cap M_\gamma) \to S/(S\cap M_\gamma) \to S/S_i \to 0$ belong to $\mathcal C$. Thus $S/(S\cap M_\gamma) \in \mathcal C$, and we infer that condition ($\ast$) holds for all $\gamma < \delta \leq \alpha$.    

In order to prove condition (S2), consider a countable subset $C = \{c_i \mid i<\omega\}$ in $M_\alpha$. Let $\alpha_i < \alpha$ be ordinals such that $c_i \in M_{\alpha_i}$ for each $i < \omega$. Again, we may assume that the sequence $(\alpha_i \mid i<\omega)$ is increasing. By induction on $i < \omega$ we construct a chain of modules $S_0 \subseteq \dots \subseteq S_i \subseteq S_{i+1} \subseteq \dots$ such that $\{c_0, \dots, c_i\} \subseteq S_i$ and $S_i \in \mathcal S_{\alpha_i}$ for all $i<\omega$. Let $S_0 \in \mathcal S _{\alpha_0}$ be such that $c_0 \in S_0$. Assuming we have already constructed $S_0, \dots, S_i$, we choose $S_{i+1} \in \mathcal S _{\alpha_{i+1}}$ so that $D_i \cup \{c_{i+1}\} \subseteq S_{i+1}$, where $D_i \subseteq M _{\alpha_i}\, (\subseteq M_{\alpha_{i+1}})$ is a countable set generating $S_i$. Clearly $S_i \subseteq S_{i+1}$ and $C \subseteq \bigcup_{i<\omega} S_i \in \mathcal S_\alpha$. This proves condition (S2).

For the verification of (S3), let $S_0 \subseteq \dots \subseteq S_i \subseteq S_{i+1} \subseteq \dots$ be a chain of modules from $\mathcal S_\alpha$, where $S_i = \bigcup_{j<\omega} S_{ij}$, $S_{ij} \in \mathcal S_{\alpha_{ij}}$, and as above, we can assume that $(\alpha_{ij} \mid j<\omega)$ is an increasing sequence of ordinals $<\alpha$ for each $i<\omega$, and $S_{ij} = S_i \cap M_{\alpha_j}$ for each $j < \omega$. 

Let $S = \bigcup_{i<\omega} S_i$ and $\alpha ^\prime = \sup_{i,j<\omega} \alpha_{ij}$. If $\alpha ^\prime < \alpha$, then $S_{ij} \in \mathcal S _{\alpha ^\prime}$ for all $i,j < \omega$, whence $S_i \in \mathcal S _{\alpha ^\prime}$ for all $i<\omega$, and $S \in \mathcal S_{\alpha ^\prime} \subseteq \mathcal S_\alpha$. 

If $\alpha ^\prime = \alpha$, then $\alpha$ has cofinality $\omega$. Let $(\beta_k \mid k<\omega)$ be an increasing sequence of ordinals $<\alpha$ with $\sup_{k<\omega}\beta_k = \alpha$. By condition ($\ast$) for $\delta = \alpha$, we have $S_i \cap M_{\beta _k} \in \mathcal S _{\beta _k}$ for each $i < \omega$.
So for each $k < \omega$, $S \cap M_{\beta_k} = \bigcup_{i<\omega}(S_i \cap M_{\beta_k}) \in \mathcal S_{\beta_k}$. We conclude that 
$S = \bigcup_{k<\omega}(S \cap M_{\beta_k}) \in \mathcal S_\alpha$ by the definition of $\mathcal S _\alpha$ above.  
\end{proof}

\section{Trees and Bass modules}

Next we turn to Bass modules and trees on cardinals. They will form the algebraic and combinatorial background, respectively, for our construction of particular locally $\mathcal F$-free modules.

\begin{definition}\label{bass_module} A module $M$ is a \emph{Bass module} for $\mathcal F$, provided that $M$ is the direct limit of a direct system 
\begin{equation}\label{e0}
F_0 \overset{g_0}\to F_1 \overset{g_1}\to \dots \overset{g_{i-1}}\to F_i \overset{g_i}\to F_{i+1} \overset{g_{i+1}}\to \dots ,
\end{equation}
where $F_i \in \mathcal F$ and $g_i \in \Hom R{F_i}{F_{i+1}}$ for all $i < \omega$. 

Clearly, $M \in \varinjlim_{\omega} \mathcal F$, and the canonical presentation of the direct limit yields 
\begin{equation}\label{e1}
0 \to \bigoplus_{i < \omega} F_i \overset{f}\to \bigoplus_{i < \omega} F_i \to M \to 0,
\end{equation}
where $f(x) = x - g_i(x)$ for all $i < \omega$ and $x \in F_i$.      
\end{definition} 

Conversely, each module $M \in \varinjlim_{\omega} \mathcal F$ is of the form (\ref{e1}), and hence it is a Bass module for $\mathcal F$, see \cite[2.12]{GT}. 
So $\varinjlim_{\omega} \mathcal F$ coincides with the class of all Bass modules for $\mathcal F$. 

\begin{example}\label{Bass} Our terminology comes from the fact that if $R$ is a non-right perfect ring with a strictly decreasing chain of principal left ideals $( Ra_i...a_0 \mid i < \omega )$, $\mathcal F = \{ R \}$, and $g_i : R \to R$ is the left multiplication by $a_i$, then $M$ is the countably presented flat module used by Bass to prove his Theorem~P, see \cite[\S28]{AF}.
\end{example} 

\medskip
We now define a combinatorial pattern for constructing locally $\mathcal F$-free modules. 

\begin{definition}\label{trees} 
Let $\kappa$ be an infinite cardinal, and $T_\kappa$ be the set of all finite sequences of ordinals $< \kappa$, so 
$T_\kappa = \{ \tau : n \to \kappa \mid n <\omega \}.$

Partially ordered by inclusion, $T_\kappa$ is a tree, called the \emph{tree on $\kappa$}. Notice that $\mbox{card}(T_\kappa) = \kappa$.
For each $\tau \in T_\kappa$, we will denote by $\ell(\tau)$ the length of $\tau$ .

Let $\mbox{Br}(T_\kappa)$ denote the set of all branches of $T_\kappa$. Each $\nu \in \mbox{Br}(T_\kappa)$ can be identified with an $\omega$-sequence of ordinals $< \kappa$, so $\mbox{Br}(T_\kappa) = \kappa^\omega$.
\end{definition}

Our construction of locally $\mathcal F$-free modules consists in decorating the trees $T_\kappa$ with the Bass modules $M \in \varinjlim_{\omega} \mathcal F$ using the direct system (\ref{e0}).
 
\begin{definition}\label{decorated}
Let $D = \bigoplus_{\tau \in T_\kappa} F_{\ell(\tau)}$, and $P = \prod_{\tau \in T_\kappa} F_{\ell(\tau)}$. 
Moreover, let $L$ be a module $D \subseteq L \subseteq P$ defined as follows:

For $\nu \in \mbox{Br}(T_\kappa)$, $i < \omega$, and $x \in F_i$, we take $x_{\nu i} \in P$ such that 
\begin{itemize}
\item $\pi_{\nu \restriction i} (x_{\nu i}) = x$, 
\item $\pi_{\nu \restriction j} (x_{\nu i}) = g_{j-1}\dots g_i(x)$ for all $i < j < \omega$, and 
\item $\pi_\tau (x_{\nu i}) = 0$ otherwise,
\end{itemize}
where $\pi_\tau \in \Hom {R}{P}{F_{\ell(\tau)}}$ denotes the $\tau$th projection for each $\tau \in T_\kappa$.

Let $Y_{\nu i} = \{ x_{\nu i} \mid x \in F_i \}$. Then $Y_{\nu i}$ is a submodule of $P$ isomorphic to $F_i$ via the assignment $x \mapsto x_{\nu i}$. 

Put $X_{\nu i} = \sum_{j \leq i} Y_{\nu j}$. Then $X_{\nu i} \subseteq X_{\nu, i+1}$, and
$X_{\nu i} = \bigoplus_{j < i} F_j \oplus Y_{\nu i} \cong \bigoplus_{j \leq i} F_j$.

Finally, we define $X_\nu = \bigcup_{i < \omega} X_{\nu i}$, and $L = \sum_{\nu \in \mbox{Br}(T_\kappa)} X_\nu$. 
\end{definition}

Next, we present the basic properties of the modules $X_\nu$ and $L$:

\begin{lemma} 
$X_\nu \cong \bigoplus_{i < \omega} F_i$ for each $\nu \in \mbox{Br}(T_\kappa)$.
\end{lemma}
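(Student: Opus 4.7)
The plan is to exhibit an explicit isomorphism $\phi \colon \bigoplus_{i<\omega} F_i \to X_\nu$, sending a tuple $(x_i)_{i<\omega}$ of almost-zero elements (with $x_i \in F_i$) to the finite sum $\sum_{i<\omega} (x_i)_{\nu i} \in P$, where $(x_i)_{\nu i}$ denotes the element of $P$ associated to $x_i$ as in Definition \ref{decorated}. Since $Y_{\nu i} = \{x_{\nu i} \mid x \in F_i\}$ and $X_{\nu i} = \sum_{j \leq i} Y_{\nu j}$, the map $\phi$ lands in $X_\nu = \bigcup_{i<\omega} X_{\nu i}$ and is manifestly surjective.

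The only genuine work is verifying injectivity, and this is where the concrete definition of $x_{\nu i}$ pays off. Suppose $\phi((x_i)_{i<\omega}) = 0$, with the support of $(x_i)$ contained in $\{0,1,\dots,n\}$. Applying the projection $\pi_{\nu \restriction k}$ for $k = 0, 1, \dots, n$ in turn collapses the sum by a triangular argument: at coordinate $\nu \restriction 0$ only the summand $(x_0)_{\nu 0}$ contributes nontrivially, yielding $x_0 = 0$; at coordinate $\nu \restriction 1$ the $j=0$ contribution vanishes because $x_0 = 0$, so the equation reduces to $x_1 = 0$; inductively, at coordinate $\nu \restriction k$ one obtains $x_k = 0$ once $x_0 = \dots = x_{k-1} = 0$ has been established. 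Hence $\phi$ is injective.

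I expect no serious obstacle: the isomorphism $X_{\nu i} \cong \bigoplus_{j \leq i} F_j$ was already recorded in Definition \ref{decorated}, and the argument above is the natural $\omega$-limit of that one. The only subtlety worth mentioning in the writeup is that the chain $X_{\nu 0} \subseteq X_{\nu 1} \subseteq \cdots$ is compatible with the direct sum decomposition, i.e.\ each inclusion $X_{\nu i} \hookrightarrow X_{\nu,i+1}$ corresponds under $\phi^{-1}$ to the canonical inclusion $\bigoplus_{j \leq i} F_j \hookrightarrow \bigoplus_{j \leq i+1} F_j$, so the isomorphism passes to the union.
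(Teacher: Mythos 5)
Your proof is correct. It establishes the statement by exhibiting the global map $\phi\colon (x_i)_{i<\omega}\mapsto\sum_i (x_i)_{\nu i}$ and checking bijectivity directly: surjectivity is immediate from $X_\nu=\bigcup_i\sum_{j\le i}Y_{\nu j}$, and your injectivity argument via the projections $\pi_{\nu\restriction k}$ is a valid triangular induction, since $\pi_{\nu\restriction k}\bigl(\sum_{i\le n}(x_i)_{\nu i}\bigr)=x_k+\sum_{i<k}g_{k-1}\cdots g_i(x_i)$ and the summands with $i>k$ contribute $0$. In substance you prove that $X_\nu$ is the internal direct sum $\bigoplus_{j<\omega}Y_{\nu j}$, which is exactly what the paper obtains, but by a different verification: the paper shows that each inclusion $X_{\nu i}\subseteq X_{\nu,i+1}$ splits with complement $Y_{\nu,i+1}\cong F_{i+1}$, using the split exact sequence $0\to Y_{\nu i}\to F_i\oplus Y_{\nu,i+1}\to F_{i+1}\to 0$ built from the identity $x_{\nu i}=x+(g_i(x))_{\nu,i+1}$, and then passes to the union of the chain. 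Your route is more elementary and self-contained (no appeal to splitting of short exact sequences), at the cost of not isolating the split-monomorphism structure of the chain $(X_{\nu i})_i$, which is the form of the statement the paper reuses later (e.g.\ for the modules $X_C$ in Lemma \ref{L}); your closing remark about compatibility of $\phi$ with the chain recovers that information anyway.
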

\begin{proof} The inclusion $X_{\nu i} \subseteq X_{\nu, i+1}$ splits, since there is a split exact sequence 
$$0 \to Y_{\nu i} \overset{p}\hookrightarrow F_i \oplus Y_{\nu, i+1} \overset{q}\to F_{i+1} \to 0$$
where $p (x_{\nu i}) = x + (g_i(x))_{\nu, i+1}$, and $q (x + y_{\nu, i+1}) = y - g_i(x)$.  
\end{proof}
 
\begin{lemma}\label{L} $L/D \cong N^{(\mbox{Br}(T_\kappa))}$, and $L$ is locally $\mathcal F$-free.
\end{lemma}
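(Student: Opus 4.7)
The plan is to prove the two claims separately, exploiting the fact that any two distinct branches of $T_\kappa$ share only a finite initial segment. (Throughout, I read $N$ as the Bass module $M$ of Definition~\ref{bass_module}.)

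For the isomorphism, introduce, for each branch $\nu$, the submodule $D_\nu := \bigoplus_{j<\omega} F_j$ of $D$ supported on the positions $\nu \restriction j$. Two identities drive the argument. First, since $X_{\nu, j+1}$ contains $F_j$ as a direct summand at position $\nu \restriction j$, we have $D_\nu \subseteq X_\nu$, and combined with the fact that $X_\nu$ is supported on the branch $\nu$, this yields $X_\nu \cap D = D_\nu$. Second, $X_\nu / D_\nu \cong M$: modulo $D_\nu$, the generator $y_{\nu i}$ is congruent to $(g_i(y))_{\nu, i+1}$, which encodes the defining relation of the direct limit, so the assignment $y_{\nu i} \mapsto [y]_M$ descends to a surjection onto $M$; injectivity is immediate since $y_{\nu i} \in D_\nu$ forces $g_{j-1} \cdots g_i(y) = 0$ for all large $j$, i.e.\ $[y]_M = 0$.

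The main obstacle in the first claim is showing that the sum $\sum_\nu (X_\nu + D)/D$ is \emph{direct} in $L/D$. Given distinct branches $\nu_0, \ldots, \nu_n$ and elements $\xi_k \in X_{\nu_k}$ with $\sum_k \xi_k \in D$, I would pick $i$ large enough that $\nu_0 \restriction j, \ldots, \nu_n \restriction j$ are pairwise distinct for every $j \geq i$, and decompose $\xi_k = \xi_{k, D} + y_{k, \nu_k, i}$ with $\xi_{k, D} \in D$. Beyond level $i$ the supports of the elements $y_{k, \nu_k, i}$ lie on pairwise disjoint portions of $T_\kappa$, so the overall finite-support condition forces each individual $y_{k, \nu_k, i}$ to have finite support, whence $[y_k]_M = 0$ and $\xi_k \in D$. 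Combined with the isomorphism $X_\nu / D_\nu \cong M$ from the previous paragraph, this gives $L/D \cong \bigoplus_{\nu \in \mbox{Br}(T_\kappa)} M = M^{(\mbox{Br}(T_\kappa))}$.

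For local $\mathcal F$-freeness, I propose the witnessing set $\mathcal S := \{L_B \mid B \subseteq \mbox{Br}(T_\kappa) \text{ countable}\}$, where $L_B := \sum_{\nu \in B} X_\nu$. Conditions (S2) and (S3) of Definition~\ref{def_loc_free} are straightforward: each element of $L$ already lies in a finite sum $X_{\nu_1} + \cdots + X_{\nu_k}$, and $L_{\bigcup_n B_n} = \bigcup_n L_{B_n}$. The crux is (S1), namely verifying $L_B \in \mathcal C$. Enumerating $B = \{\nu_n \mid n < \omega\}$ and setting $L_B^{(n)} := \sum_{k < n} X_{\nu_k}$, the key computation is that $X_{\nu_n} \cap L_B^{(n-1)} = \bigoplus_{j \leq m_n} F_j$ is a \emph{finite} direct sum, where $m_n$ is the length of the longest initial segment that $\nu_n$ shares with any of the finitely many earlier $\nu_k$; the directness argument above places this intersection inside $D$, and only finitely many positions of $\nu_n$ are shared with $\nu_0, \ldots, \nu_{n-1}$. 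Since this finite sum is exactly the leading direct summand $\bigoplus_{j \leq m_n} F_j$ of $X_{\nu_n, m_n + 1}$, the chain
$$ L_B^{(n-1)} \subseteq L_B^{(n-1)} + X_{\nu_n, m_n + 1} \subseteq L_B^{(n-1)} + X_{\nu_n, m_n + 2} \subseteq \cdots $$
in $L_B$ has successive quotients $F_{m_n + 1}, F_{m_n + 2}, \ldots$, all in $\mathcal F$. Concatenating these chains over $n$ produces a countable $\mathcal F$-filtration of $L_B$, so $L_B \in \mathcal C$ by Lemma~\ref{hillappl}.
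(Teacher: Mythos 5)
Your proof is correct and takes essentially the same route as the paper's: the same identification of $(X_\nu + D)/D$ with the Bass module, the same branch-support argument for the independence of these submodules of $L/D$, and the same witnessing family $\{X_C \mid C \subseteq \mbox{Br}(T_\kappa) \text{ countable}\}$. The only cosmetic difference is in (S1), where you build an explicit countable $\mathcal F$-filtration of $L_B$ while the paper shows $X_C$ is isomorphic to a countable direct sum of the $F_i$s via the split inclusions $X_{C_i} \subseteq X_{C_{i+1}}$ --- the same computation packaged differently.
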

\begin{proof} Let $\nu \in  \mbox{Br}(T_\kappa)$. Then $N \cong (X_\nu + D)/D$. Indeed,  for each $i < \omega$, we can define $f_i : F_i \to (X_\nu + D)/D$ by $f_i(x) = x_{\nu i} + D$. Then $((X_\nu + D)/D, f_i \mid i \in I)$ is the direct limit of the direct system (\ref{e0}).

Since each element of $X_\nu$ is a sequence in $P$ whose $\tau$th component is zero for all $\tau \notin \{ \nu \restriction i \mid i < \omega \}$, the modules $((X_\nu + D)/D \mid \nu \in \mbox{Br}(T_\kappa) )$ are independent. Thus $L/D = \bigoplus_{\nu \in  \mbox{Br}(T_\kappa)} (X_\nu + D)/D \cong N^{(\mbox{Br}(T_\kappa))}$.

For each countable subset $C = \{ \nu_i \mid i < \omega \}$ of $\mbox{Br}(T_\kappa)$, the module $X_C = \sum_{\nu \in C} X_\nu$ 
is isomorphic to a countable direct sum of the $F_i$s. Indeed, $X_C = \bigcup_{i < \omega} X_{C_i}$, where 
$X_{C_i} = \sum_{j \leq i} X_{\nu_j}$ is a direct summand in $X_{C_{i+1}}$, with the complementing summand 
isomorphic to a countable direct sum of the $F_i$s. So the local $\mathcal F$-freeness of $L$ is witnessed by the set 
$\mathcal S$ of all $X_C$, where $C$ runs over all countable subsets of $\mbox{Br}(T_\kappa)$. 
\end{proof}

\section{The non-deconstructibility of locally $\mathcal F$-free modules}

We are going to apply the locally $\mathcal F$-free modules constructed above by the decoration of trees with the Bass modules. We use them to prove the non-deconstructibi\-lity of the class of all locally $\mathcal F$-free modules in the case when $\varinjlim_{\omega} \mathcal F \nsubseteq \mathcal D$. 

The point is that in our setting $\mathcal L ^\perp \subseteq (\varinjlim_{\omega} \mathcal F)^\perp$. The 
proof of this fact is by a simple counting argument using almost no algebra: 

\begin{lemma}\label{griffith_counting} Let $M$ be a module such that $\Ext 1RLM = 0$ for each locally $\mathcal F$-free module $L$. 
Then $\Ext 1RNM = 0$ for each module $N \in \varinjlim_{\omega} \mathcal F$.
\end{lemma}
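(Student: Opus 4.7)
The plan is to argue the contrapositive: assume $N \in \varinjlim_{\omega} \mathcal F$ with $\Ext 1RNM \neq 0$, and, for a suitably chosen large cardinal $\kappa$, produce a locally $\mathcal F$-free module $L$ with $\Ext 1RLM \neq 0$, contradicting the hypothesis. Such an $L$ is exactly what the tree decoration of Section 3 supplies: fixing a Bass presentation (\ref{e0}) of $N$ and applying Definition \ref{decorated}, we obtain, by Lemma \ref{L}, a locally $\mathcal F$-free module $L = L_\kappa$ fitting into a short exact sequence
$$0 \longrightarrow D \longrightarrow L \longrightarrow N^{(\mbox{Br}(T_\kappa))} \longrightarrow 0.$$

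Applying $\Hom R{-}{M}$ and using $\Ext 1RLM = 0$ (from the hypothesis), we get a surjective connecting map
$$\Hom RDM \twoheadrightarrow \Ext 1R{N^{(\mbox{Br}(T_\kappa))}}M \;\cong\; \bigl(\Ext 1RNM\bigr)^{\mbox{Br}(T_\kappa)}.$$
The cardinality of the target is at least $2^{|\mbox{Br}(T_\kappa)|} = 2^{\kappa^\omega}$, since $\Ext 1RNM$ has at least two elements and $|\mbox{Br}(T_\kappa)| = \kappa^\omega$. For the source, each $F_i$ is countably presented, so $|F_i| \leq r := |R| + \aleph_0$; taking $\kappa \geq r$ gives $|D| \leq \kappa \cdot r = \kappa$, and hence $|\Hom RDM| \leq |M|^\kappa$. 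Surjectivity therefore forces $|M|^\kappa \geq 2^{\kappa^\omega}$.

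The main obstacle is choosing $\kappa$ to break this inequality; this is where set theory enters the otherwise almost algebra-free argument. Naive choices can fail because it is possible to have $2^{\kappa^\omega} = 2^\kappa$. My choice would be $\kappa = \beth_\omega(\max(|M|,r))$, a strong limit singular cardinal of cofinality $\omega$ with $\kappa > |M|, r$. Such a $\kappa$ satisfies the identity $\kappa^\omega = 2^\kappa$: writing $\kappa = \sup_n \kappa_n$ with $\kappa_n < \kappa$ strictly increasing, the strong limit property gives $2^{\kappa_n} < \kappa$, so $2^\kappa = \prod_n 2^{\kappa_n} \leq \prod_n \kappa = \kappa^\omega$, while $\kappa^\omega \leq (2^\kappa)^\omega = 2^\kappa$ is automatic. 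Consequently $2^{\kappa^\omega} = 2^{2^\kappa}$, whereas $|M|^\kappa \leq \kappa^\kappa = 2^\kappa$. Cantor's inequality $2^\kappa < 2^{2^\kappa}$ then contradicts the surjectivity deduced above, yielding the desired conclusion $\Ext 1RNM = 0$.
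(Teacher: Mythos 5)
Your proof is correct and follows essentially the same route as the paper's: decorate the tree $T_\kappa$ with the Bass presentation of $N$, use the exact sequence $0 \to D \to L \to N^{(\mbox{Br}(T_\kappa))} \to 0$ from Lemma \ref{L} together with the surjective connecting homomorphism, and derive a cardinality contradiction from $\kappa^\omega = 2^\kappa$. The only (harmless) difference is that you construct the required cardinal $\kappa$ explicitly as a strong limit of cofinality $\omega$ above $|M|$, where the paper simply cites \cite[8.26(a)]{GT} for its existence.
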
 
\begin{proof} Let $\kappa$ be an infinite cardinal such that $\kappa^\omega = 2^\kappa$ and $M$ has cardinality $\leq 2^\kappa$ 
(such cardinal exists e.g.\ by \cite[8.26(a)]{GT}). Consider $N \in \varinjlim_{\omega} \mathcal F$. Then $N$ is a Bass module with a direct limit presentation \ref{e0}.

Let $L$ be the corresponding locally $\mathcal F$-free module fitting into the exact sequence $0 \to D \to L \to N^{(\hbox{Br}(T_\kappa))} \to 0$ (see Lemma \ref{L}). Applying the functor $\Hom R{-}M$ to this sequence, we see that the connecting homomorphism $\Hom RDM \to \Ext 1R{N^{(\hbox{Br}(T_\kappa))}}M$ is surjective.

Assume that $\Ext 1RNM \neq 0$. Since $\kappa ^\omega = 2^\kappa$, 
$\hbox{card}(\Ext 1R{N^{(\hbox{Br}(T_\kappa))}}M)  = \hbox{card}(\Ext 1RNM ^{\kappa ^\omega}) \geq 2^{2^\kappa}$, while $\hbox{card}(\Hom RDM )  = \hbox{card}(M^\kappa) \leq 2^\kappa$, a contradiction. 
\end{proof}

Now, we can present our main result concerning deconstructibility:

\begin{theorem}\label{non-dec} Assume there exists a module $C \in ( \varinjlim_{\omega} \mathcal F ) \setminus \mathcal D$. 
Then $\mathcal L \neq {}^\perp \mathcal E$ for any class of modules $\mathcal E$, and $\mathcal L$ is not deconstructible. 
\end{theorem}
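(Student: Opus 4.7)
The plan rests on Lemma~\ref{griffith_counting} (which gives $\mathcal L^\perp \subseteq (\varinjlim_\omega \mathcal F)^\perp$) and on Lemma~\ref{hillappl}.

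For the first claim, I would suppose $\mathcal L = {}^\perp \mathcal E$ for some class $\mathcal E$. The double-orthogonal chase, using $\mathcal E \subseteq \mathcal L^\perp$, forces $\mathcal L = {}^\perp(\mathcal L^\perp)$; combining with Lemma~\ref{griffith_counting},
\[
\mathcal L \;=\; {}^\perp(\mathcal L^\perp) \;\supseteq\; {}^\perp\bigl((\varinjlim_\omega \mathcal F)^\perp\bigr) \;\supseteq\; \varinjlim_\omega \mathcal F \;\ni\; C.
\]
So $C \in \mathcal L$, and since $C \in \varinjlim_\omega \mathcal F$ is countably presented, Lemma~\ref{hillappl} places $C$ in $\mathcal C$; the trivial exact sequence $0 \to 0 \to C \to C \to 0$ then witnesses $C \in \mathcal D$, contradicting the hypothesis.

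For the second claim, I would suppose $\mathcal L = \Filt(\mathcal L^{<\kappa})$ for some cardinal $\kappa$. Eklof's lemma gives $\mathcal L^\perp = (\mathcal L^{<\kappa})^\perp$, so the cotorsion pair $({}^\perp \mathcal L^\perp, \mathcal L^\perp)$ is generated by the set $\mathcal L^{<\kappa}$ and is therefore complete, with ${}^\perp \mathcal L^\perp$ equal to the direct-summand closure of $\Filt(\mathcal L^{<\kappa} \cup \{R\})$. A direct application of Lemma~\ref{griffith_counting} (as in the first claim) yields $\varinjlim_\omega \mathcal F \subseteq {}^\perp \mathcal L^\perp$, so $C$ is a direct summand of some $N = C \oplus N'$ with $N \in \Filt(\mathcal L^{<\kappa} \cup \{R\})$. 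Since $\mathcal L^{<\kappa} \subseteq \mathcal L$ and $R \in \mathcal L$ (which holds in the settings of Example~\ref{MLtilt} where the hypothesis $C \in \varinjlim_\omega \mathcal F \setminus \mathcal D$ is non-vacuous), the closure of $\mathcal L$ under transfinite extensions (Theorem~\ref{transf_closed}) puts $N$ in $\mathcal L$. Let $\mathcal S_N$ witness the local $\mathcal F$-freeness of $N$; by property (S2) there exists $S \in \mathcal S_N$ with $C \subseteq S$, and from $C \subseteq S$ and $N = C \oplus N'$ the identity $S = C \oplus (S \cap N')$ holds, exhibiting $C$ as a direct summand of $S$. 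Since $S \in \mathcal C$ by property (S1), taking $P = 0$, $M = S$, $C' = S$ in the definition of $\mathcal D$ shows $C \in \mathcal D$, the desired contradiction.

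The main obstacle is justifying $R \in \mathcal L$ in full generality, which is needed so that $N \in \Filt(\mathcal L^{<\kappa} \cup \{R\})$ actually lies in $\mathcal L$ and thereby admits a local-freeness witness. Where this cannot be assumed, one would replace the step ``$N \in \mathcal L$'' by a direct Hill Lemma argument on $N$'s mixed filtration: splitting off its $R$-atoms (as a closed subfamily in the sense of Lemma~\ref{hill}) into a free submodule $P_0$, and then recovering the summand $C$ inside the $\mathcal L^{<\kappa}$-filtered quotient $N/P_0$ via the same witness-family machinery from Definition~\ref{def_loc_free}.
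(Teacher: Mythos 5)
Your first claim is proved exactly as in the paper: the orthogonal chase via Lemma \ref{griffith_counting} gives $C \in \varinjlim_{\omega}\mathcal F \subseteq {}^\perp\mathcal E = \mathcal L$, and countable presentability plus Lemma \ref{hillappl} gives $C \in \mathcal C \subseteq \mathcal D$. No issues there.

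The second claim has a genuine gap, and it is the one you flag yourself: the step ``$N \in \mathcal L$'' rests on $R \in \mathcal L$, which is \emph{false} in precisely the situations where the theorem is applied. For the Lukas tilting module (Corollary \ref{Lukas}), $\mathcal F = \{L^{(\omega)}\}$ and, as Remark \ref{different} records, $\mathcal L$ contains no non-zero finitely generated modules, so $R \notin \mathcal L$; the same failure occurs in the Dedekind case of Corollary \ref{Dedekind} whenever $R_P \neq R$. Your fallback (``splitting off the $R$-atoms into a free submodule $P_0$'') is not a proof: in a $\Filt(\mathcal L^{<\kappa}\cup\{R\})$-filtration the indices carrying the factor $R$ need not form a closed subset in the sense of Lemma \ref{hill}, so you cannot simply rearrange them to the bottom; you would in effect be re-proving the refined completeness statement you need. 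The paper avoids all of this by invoking \cite[6.13]{GT} in its sharp form: $C$ is a direct summand of a module $E$ sitting in an exact sequence $0 \to P \to E \overset{\pi}\to L \to 0$ with $P$ free \emph{already at the bottom} and $L \in \Filt(\mathcal L^{<\kappa}) = \mathcal L$. Then, since $C$ is countably generated, condition (S2) for $L$ places $\pi(C)$ inside some $D \in \mathcal C$, so $C$ is a direct summand of $\pi^{-1}(D)$, which fits into $0 \to P \to \pi^{-1}(D) \to D \to 0$ --- literally the shape defining $\mathcal D$. Replacing your ``$N \in \mathcal L$'' step by this citation (and noting that you still need the cut-down via (S2), since the definition of $\mathcal D$ requires the top factor to lie in $\mathcal C$, not merely in $\mathcal L$) repairs the argument; as written, it does not go through.
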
   
\begin{proof} First, assume that $\mathcal L = {}^\perp \mathcal E$ for a class $\mathcal E$. Then $\mathcal E \subseteq \mathcal L ^\perp \subseteq (\varinjlim_{\omega} \mathcal F)^\perp$ by Lemma \ref{griffith_counting}, hence 
$C \in \varinjlim_{\omega} \mathcal F \subseteq {}^\perp \mathcal E = \mathcal L$. Since $C$ is countably presented, $C \in \mathcal C \subseteq \mathcal D$ by Lemma \ref{hillappl}, a contradiction. 

Assume there is a cardinal $\kappa$ such that $\mathcal L = \Filt (\mathcal L ^{< \kappa})$.  
Consider the cotorsion pair $(^\perp((\mathcal L ^{< \kappa})^\perp), (\mathcal L ^{< \kappa})^\perp)$. 
By the Eklof Lemma \cite[6.2]{GT}, $(\mathcal L ^{< \kappa})^\perp = \mathcal L ^\perp$, so $C \in {}^\perp((\mathcal L ^{< \kappa})^\perp)$ by Lemma \ref{griffith_counting}.
By \cite[6.13]{GT}, $C$ is isomorphic to a direct summand in a module $E$ of the form $0 \to P \to E \overset{\pi}\to L \to 0$, where $P$ is a free module and $L \in \mathcal L$. Since $C$ is countably generated, condition (S2) for $L$ implies that $\pi(C)$ is contained in some $D \in \mathcal C$, whence $C$ is a direct summand in $\pi^{-1}(D)$. Thus $C \in \mathcal D$, a contradiction.
\end{proof} 

We record a particular instance of Theorem \ref{non-dec} in the tilting setting:

\begin{corollary}\label{tilt} Let $T$ be a tilting module which is a direct sum of countably presented modules, $T = \bigoplus_{i \in I} T_i$. Let $\mathcal F$ be a representative set of $\{ T_i \mid i \in I \}$. Assume there exists a module $C \in ( \varinjlim_{\omega} \mathcal F ) \setminus \mathcal D$. Then $\mathcal L$ is not deconstructible.     
\end{corollary}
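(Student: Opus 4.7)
The corollary is an immediate specialization of Theorem \ref{non-dec} to the tilting context; my plan is simply to verify that Theorem \ref{non-dec} applies as stated and invoke it, so the proof proposal amounts to checking that nothing about the tilting setup interferes with the setup of Section 2.

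First, I would observe that the representative set $\mathcal F$ of $\{T_i \mid i \in I\}$ consists of countably presented modules, by the hypothesis that $T$ is a direct sum of countably presented modules. This is exactly the standing convention imposed at the beginning of Section 2 (``$\mathcal F$ will denote a class of countably presented modules''), so the class $\mathcal C$ of countably $\mathcal F$-filtered modules, the class $\mathcal L$ of locally $\mathcal F$-free modules (Definition \ref{def_loc_free}), the class $\mathcal D$ of direct summands of free-by-$\mathcal C$ modules, and the class $\varinjlim_\omega \mathcal F$ of countable direct limits of the $T_i$'s are all well-defined and the entire framework of Sections 2--4 is available.

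Next, I would note that the hypothesis of the corollary --- namely the existence of a module $C \in (\varinjlim_\omega \mathcal F) \setminus \mathcal D$ --- is exactly the hypothesis of Theorem \ref{non-dec}. Applying that theorem yields directly that $\mathcal L$ is not deconstructible (and, as a bonus not recorded in the statement of the corollary, that $\mathcal L$ is not of the form ${}^\perp \mathcal E$ for any class $\mathcal E$).

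There is no real obstacle here; the point of recording this as a separate corollary is to package the non-deconstructibility criterion in a form convenient for tilting-theoretic applications. The genuine content lies in producing an actual witness $C$ in specific tilting settings (e.g., from non-$\sum$-pure-splitness of $T$, as will be needed for Theorem \ref{tiltprec}), which is left to subsequent results rather than being part of this corollary.
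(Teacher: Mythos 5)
Your proposal is correct and matches the paper exactly: the paper states Corollary \ref{tilt} without proof, precisely because it is the verbatim specialization of Theorem \ref{non-dec} to the case where $\mathcal F$ is a representative set of the countably presented summands $T_i$ of a tilting module. Your verification that $\mathcal F$ then satisfies the standing convention of Section 2 and that the hypothesis on $C$ is identical to that of Theorem \ref{non-dec} is all that is needed.
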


Specializing further, we recover \cite[7.3]{HT}: 

\begin{corollary}\label{ML} 
Let $R$ be a non-right perfect ring. Then the class of all flat Mittag-Leffler modules is not deconstructible.  
\end{corollary}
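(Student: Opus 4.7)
The plan is to invoke Theorem \ref{non-dec} with $\mathcal F$ taken to be a representative set of all countably generated projective right $R$-modules, exactly as in Example \ref{MLtilt}(i). That example already identifies $\mathcal L$ with the class of all flat Mittag-Leffler modules and $\varinjlim_\omega \mathcal F$ with the class of all countably presented flat modules, so the task reduces to exhibiting a countably presented flat module lying outside $\mathcal D$.

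First I would identify $\mathcal D$ in this setting. A countable $\mathcal F$-filtration of a module splits at each step, since each factor is projective, so by Lemma \ref{hillappl} the class $\mathcal C$ coincides with the class of all countably generated projective modules. Hence any short exact sequence $0 \to P \to M \to C' \to 0$ with $P$ free and $C' \in \mathcal C$ splits, giving $M \cong P \oplus C' \in \mathcal P_0$; conversely, every projective module is a direct summand of a free one, so $\mathcal D = \mathcal P_0$.

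The remaining ingredient is the hypothesis that $R$ is not right perfect. By Bass's classical Theorem~P \cite[28.4]{AF}, this is equivalent to the existence of a strictly decreasing chain $R a_0 \supsetneq R a_1 a_0 \supsetneq R a_2 a_1 a_0 \supsetneq \dots$ of principal left ideals. Feeding the sequence $(a_i \mid i<\omega)$ into the construction recalled in Example \ref{Bass} yields the associated Bass module $C$, which is countably presented, flat, and not projective (this is Bass's original counterexample). Thus $C \in (\varinjlim_\omega \mathcal F) \setminus \mathcal D$, and Theorem \ref{non-dec} immediately yields that $\mathcal L$, the class of all flat Mittag-Leffler modules, is not deconstructible.

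No step poses a real obstacle here; the proof is a direct specialization of Theorem \ref{non-dec}. The only observation worth spelling out is the collapse $\mathcal D = \mathcal P_0$, which rests purely on the splitting of short exact sequences ending in a projective module and uses no hypothesis on $R$.
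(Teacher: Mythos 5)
Your proof is correct and follows essentially the same route as the paper: specialize Theorem \ref{non-dec} to $\mathcal F$ the countably presented projectives, identify $\mathcal L$ and $\varinjlim_\omega\mathcal F$ via Example \ref{MLtilt}(i), and produce a countably presented flat non-projective Bass module from the failure of right perfectness. Your computation $\mathcal D=\mathcal P_0$ (the paper writes $\mathcal D=\mathcal C$, i.e.\ only the countably presented projectives) is the more accurate reading of the definition of $\mathcal D$, and either version suffices since the Bass module is countably presented and not projective.
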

\begin{proof} Let $\mathcal F$ be a representative set of all countably presented projective modules and $T = \bigoplus_{P \in \mathcal F} P$. 
Clearly, $T$ is a tilting module. Moreover, $\mathcal D = \mathcal C$ is the class of all countably presented projective modules, and $\mathcal L$ is the class of all flat Mittag-Leffler modules (see \cite[2.10]{HT} or \cite[3.19]{GT}). Since $R$ is not right perfect, there exists a countably presented flat module $C$ which is not projective. Then $C \in \varinjlim_{\omega} \mathcal F \setminus \mathcal D$ and Theorem \ref{non-dec} applies.
\end{proof}

 Next, we consider an application to Dedekind domains:

\begin{corollary}\label{Dedekind} Let $R$ be a Dedekind domain with the quotient field $Q \neq R$, and $P$ be a non-empty set of maximal ideals in $R$, such that $\mspec R \setminus P$ is countable. Let $\mathcal F = \{ \bigcap_{p \in P} R_{(p)} \} \cup \{ E(R/q) \mid q \in \mspec R \setminus P \}$. Then the class $\mathcal L$ is not deconstructible.
\end{corollary}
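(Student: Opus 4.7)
The plan is to apply Corollary~\ref{tilt} with the classical Fuchs--Salce tilting module $T = R_P \oplus R_P/R$ over the Dedekind domain $R$, where $R_P = \bigcap_{p \in P} R_{(p)}$. By the primary decomposition of the torsion $R$-module $R_P/R$ over a Dedekind ring, one has $R_P/R \cong \bigoplus_{q \in \mspec R \setminus P} E(R/q)$, so $T$ decomposes as a direct sum of the indecomposable modules $R_P$ and $E(R/q)$ for $q \in \mspec R \setminus P$. Each of these summands is countably presented: the $E(R/q)$'s are countably generated over the Noetherian ring $R$, and the short exact sequence $0 \to R \to R_P \to R_P/R \to 0$ combined with the hypothesis that $\mspec R \setminus P$ is countable shows that $R_P$ itself is a countably generated $R$-module. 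The given $\mathcal F$ is precisely a representative set of these summands.

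To invoke Corollary~\ref{tilt} I must produce a witness $C \in (\varinjlim_\omega \mathcal F) \setminus \mathcal D$. My candidate is $C := R_P[r^{-1}]$, where $r$ is any nonzero element chosen from a fixed $q_0 \in P$ (which exists since $P$ is nonempty). Letting $F \subseteq P$ denote the finite, nonempty set of primes in $P$ containing $r$ (finite by the Dedekind factorization of $(r)$; nonempty since $q_0 \in F$), one identifies $C$ with the localization $R_{P \setminus F}$. The constant direct system $R_P \xrightarrow{\,r\,} R_P \xrightarrow{\,r\,} \cdots$ exhibits $C$ as a Bass module for $\mathcal F$, so $C \in \varinjlim_\omega \mathcal F$.

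The main obstacle is showing $C \notin \mathcal D$, and my plan is to localize everything at $q_0$ and exploit the rigidity of free modules over a DVR. Suppose for contradiction that $C$ is a direct summand of some $M$ fitting into a short exact sequence $0 \to R^{(I)} \to M \to C' \to 0$ with $C' \in \mathcal C$. Tensoring with $R_{(q_0)}$ is exact and preserves direct summands. Since $q_0 \in F$ is inverted in $R_{P \setminus F}$ while $R_{(q_0)}$ inverts every other nonzero prime of $R$, the tensor product $C_{(q_0)} = R_{P \setminus F} \otimes_R R_{(q_0)}$ inverts every nonzero prime of $R$ and is therefore $Q$. Each factor of the countable $\mathcal F$-filtration of $C'$ localizes to either $R_{(q_0)}$ (from a piece $\cong R_P$, since $q_0 \in P$) or $0$ (from a piece $\cong E(R/q)$ with $q \neq q_0$, since inverting $R \setminus q_0$ annihilates any $q$-primary module). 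Because $R_{(q_0)}$ is a PID, a countable filtration with successive factors $R_{(q_0)}$ yields a free module, so $C'_{(q_0)}$ is free; the localized short exact sequence $0 \to R_{(q_0)}^{(I)} \to M_{(q_0)} \to C'_{(q_0)} \to 0$ then splits, making $M_{(q_0)}$ free over $R_{(q_0)}$. But then $Q = C_{(q_0)}$ would be a direct summand, hence itself free, over the DVR $R_{(q_0)}$, contradicting the fact that $Q$ is nonzero and divisible while no nonzero free module over a DVR is divisible. By Corollary~\ref{tilt}, $\mathcal L$ is not deconstructible.
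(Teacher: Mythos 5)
Your proposal is correct, and its skeleton coincides with the paper's: both invoke Corollary~\ref{tilt} for the Fuchs--Salce tilting module $T=R_P\oplus R_P/R\cong R_P\oplus\bigoplus_{q\in\mspec{R}\setminus P}E(R/q)$, check countable presentability of the summands via $0\to R\to R_P\to R_P/R\to 0$ and the countability of $\mspec{R}\setminus P$, and take for $C$ the Bass module obtained as the direct limit of multiplication by a fixed element of a prime in $P$ on $R_P$, i.e.\ a localization $R_{P\setminus F}$ of $R_P$. Where you genuinely diverge is the crux, the verification that $C\notin\mathcal D$. The paper argues through tilting theory: $\mathcal D\subseteq\mathcal A$ and $C\in\mathcal B$ force $C\in\mathcal A\cap\mathcal B=\Add T$, torsion-freeness then makes $C$ a direct summand of a direct sum of copies of $R_P$, and the Krull Intersection Theorem (this is why the paper picks $x\in p\setminus p^2$) rules that out since $Cx=C$. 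You instead work straight from the definition of $\mathcal D$: localize the presenting sequence $0\to R^{(I)}\to M\to C'\to 0$ at $q_0\in F$, observe that the $\mathcal F$-filtration of $C'$ localizes to a countable filtration by copies of $R_{(q_0)}$ and $0$ (the $E(R/q)$ with $q\notin P$ die), so $C'_{(q_0)}$ and hence $M_{(q_0)}$ are free over the DVR $R_{(q_0)}$, while $C_{(q_0)}=Q$ is a nonzero divisible direct summand --- impossible. Your route is more elementary and self-contained (no appeal to $\mathcal A\cap\mathcal B=\Add T$ or to the structure of torsion-free modules in $\Add T$), at the cost of a slightly longer hands-on computation with overrings; the paper's route is shorter given the tilting machinery it already has in place. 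The only spots worth tightening are routine: that a module with a countable filtration by free modules over a local ring is free (transfinite extensions of projectives are projective, and projective equals free over a DVR), and that localization of an overring $R_S$ at $q_0$ is the compositum $R_{S\cap\{q_0\}}$, which is $Q$ precisely because $q_0\notin P\setminus F$.
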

\begin{proof} Let $R_P = \bigcap_{p \in P} R_{(p)}$. Then $R_P$ is a subring of $Q$ containing $R$, so $R_P$ is a Pr\" ufer domain by \cite[III.1.1(d)]{FS}, and hence a Dedekind domain by \cite[11.7]{M}. Take $p \in P$ and $0 \neq x \in p \setminus p^2$. Denote by $C$ the Bass $R_P$-module corresponding to the choice of $a_i = x$ for all $i < \omega$ (see Example \ref{Bass}). Then $C$ is isomorphic to an $R_P$-submodule of $Q$, and $C \in \varinjlim_{\omega} \mathcal F$ (both as an $R_P$-module, and an $R$-module).  

Let $T = R_P \oplus \bigoplus_{p \neq q \in \mspec R} E(R/q)$. Then $T$ is a tilting module, and $T$ induces a cotorsion pair $(\mathcal A, \mathcal B)$ such that $\mathcal D \subseteq \mathcal A$ and $\mathcal B = \{ M \in \rmod R \mid Mq = 0 \mbox{ for all } q \in \mspec R \setminus P \}$ (see \cite[14.30]{GT}).

From the exact sequence $0 \to R \to R_P \to \bigoplus_{p \in \mspec R \setminus P} E(R/p) \to 0$ and the assumption on $P$, we infer that $R_P$ is a countably generated module. The claim will thus follow from Theorem \ref{tilt} once we prove that $C \notin \mathcal D$. 

Assume $C \in \mathcal D$. Then $C \in \mathcal A \cap \mathcal B = \Add T$, and since $C$ is a torsion-free module, $C$ is isomorphic to a direct summand in a direct sum of copies of $R_P$. However, $Cx = C$, while $R_P$ contains no non-zero submodule $N$ such that $Nx = N$, because $\bigcap_{n < \omega} x^n R_P = 0$ by the Krull Intersection Theorem. 
\end{proof}

\begin{remark}\label{except} (i) In the case of Corollary \ref{Dedekind}, a tilting module is obtained also for $P = \emptyset$, namely 
$T = Q \oplus Q/R$. However, if $\mathcal F = \{ Q \} \cup \{ E(R/q) \mid q \in \mspec R \}$, then $\mathcal L$ is the class of all divisible (= injective) modules, which is deconstructible, since $R$ is noetherian (In fact, $\mathcal L$ is even decomposable in this case, by the theorem of Faith and Walker). 
Also note that for $P = \mspec R$, $\mathcal L$ is the class of all modules $M$ such that each countably generated submodule of $M$ is free.    

(ii) More in general, the tilting module $T$ in Theorem \ref{tilt} cannot be $\sum$-pure-split (and in particular, it cannot be $\sum$-pure-injective), cf.\ \cite[13.55]{GT}: Otherwise, since the presentation of $C$ as an element of $\varinjlim_{\omega} \mathcal F$ has the form of a pure-exact sequence
\begin{equation}\label{e2}
0 \to U \to V \to C \to 0
\end{equation} 
where $U$ and $V$ are some countable direct sums of the modules $T_i$, (\ref{e2}) splits, whence $C$ is a direct summand in a countable direct sum of copies of the $T_i$s. Then $C \in \mathcal D$, a contradiction. However, as we will see in the next section, non-$\sum$-pure-split tilting modules provide for a source of non-precovering, and hence non-deconstructible classes even in the setting of artin algebras.    
\end{remark} 

There do exist non-deconstructible classes even in the setting of perfect rings. Our first example of this phenomenon employs the Lukas tilting module $L$ 
over a tame hereditary algebra (see \cite{AHT} or \cite[Example 13.7]{GT}):

\begin{corollary}\label{Lukas} Let $R$ be a finite dimensional tame hereditary algebra, such that the generic module $G$ is countably generated (e.g., the Kronecker algebra over a countable algebraically closed field). Let $\mathcal F = \{ L^{(\omega)} \}$ where $L$ is the Lukas tilting module. Then the class $\mathcal L$ is not deconstructible.  
\end{corollary}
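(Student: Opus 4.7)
The plan is to invoke Theorem~\ref{non-dec} with $T := L^{(\omega)}$ and candidate module $C := G$, the generic module of $R$. To verify the standing hypothesis that $\mathcal F$ consists of countably presented modules, I use that the Lukas module $L$ is constructed in \cite{AHT} (cf.~\cite[Example~13.7]{GT}) as the union of a countable chain of finite-dimensional preprojective modules; it is therefore countably generated, and since $R$ is artinian, countably presented. Consequently $L^{(\omega)}$ is countably presented, and $T = L^{(\omega)}$ is a tilting module equivalent to $L$, so $\Add T = \Add L$ and the two modules induce the same tilting cotorsion pair $(\mathcal A, \mathcal B)$.

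The first substantive step is to establish $G \in \varinjlim_{\omega}\mathcal F$. I would argue that $G$ has no nonzero preprojective summand (it is generic of finite endolength over its division endomorphism ring), so $G \in \mathcal B = L^{\perp_\infty}$; moreover, a standard feature of the Lukas pair over a tame hereditary algebra is that $G$ lies in the closure $\bar{\mathcal A} = \varinjlim \Add L$. Because $G$ is countably generated by hypothesis and hence countably presented over the noetherian ring $R$, a cofinality argument extracts a countable directed subsystem of $\Add L$ whose colimit is still $G$; after absorbing each countably generated summand of a power of $L$ into a fixed copy of $L^{(\omega)}$ with bonding maps extended trivially on the complementary summands, we obtain a Bass presentation $G = \varinjlim\bigl(L^{(\omega)} \overset{g_0}\to L^{(\omega)} \overset{g_1}\to \cdots\bigr)$, so $G$ is a Bass module for $\mathcal F$ in the sense of Definition~\ref{bass_module}.

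Next I would show $G \notin \mathcal D$. Since $L$ is tilting, $\Ext 1R{L^{(\omega)}}{L^{(\omega)}} = 0$, and it follows by inductive splitting that every countably $\mathcal F$-filtered module decomposes as a direct sum of copies of $L^{(\omega)}$; hence $\mathcal C \subseteq \Add L \subseteq \mathcal A$. Since free modules also lie in $\mathcal A$ and $\mathcal A$ is closed under extensions and direct summands, this forces $\mathcal D \subseteq \mathcal A$. On the other hand, the indecomposable generic module $G$ is not isomorphic to any direct summand of a direct sum of copies of $L$, so $G \notin \Add L = \mathcal A \cap \mathcal B$, and since $G \in \mathcal B$ this forces $G \notin \mathcal A$, whence $G \notin \mathcal D$. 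With both ingredients in hand, Theorem~\ref{non-dec} yields that $\mathcal L$ is not deconstructible.

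The hardest point will be the middle step: proving that $G$ admits a presentation as a \emph{countable} direct limit of copies of $L^{(\omega)}$, not merely of assorted modules from $\Add L$. It depends on combining the countable presentability of $G$ with the fact that the closure $\bar{\mathcal A}$ is generated from $\Add L$ under directed colimits, both of which are facts of the structure theory of tame hereditary algebras recorded in~\cite{AHT} and~\cite[Ch.~13]{GT}; in contrast, the inclusion $\mathcal D \subseteq \mathcal A$ and the membership $G \notin \mathcal A$ are formal consequences of the tilting pair machinery.
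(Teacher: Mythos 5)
There is a genuine gap, and it sits exactly where you predicted: the claim that $G \in \varinjlim_{\omega}\mathcal F$. Your justification rests on the identification $\bar{\mathcal A} = \varinjlim \Add L$, which is false. The closure of the Lukas cotorsion pair is $\varinjlim \mathcal S$ where $\mathcal S$ is the set of finitely generated preprojective modules, i.e.\ it is the torsion-free class $\mathcal T$; this strictly contains $\varinjlim \Add L$ (for instance $R \in \mathcal T$, but $R \notin \varinjlim \Add L$, since a finitely presented direct limit of copies of $L$ would be a direct summand of some $L^{(I)}$, and $\Add L$ contains no nonzero finitely generated modules). So knowing $G \in \bar{\mathcal A}$ tells you nothing about $G$ being a direct limit of copies of $L$, and the subsequent ``extract a countable chain and pad each term up to $L^{(\omega)}$'' step has nothing to start from. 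Whether $G$ itself lies in $\varinjlim_{\omega}\{L^{(\omega)}\}$ is precisely the delicate point, and nothing in the formal tilting machinery delivers it.

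The paper avoids this issue entirely by \emph{not} using $G$ as the witness $C$. From \cite[Corollary 11]{AHT} it produces a pure exact sequence $0 \to L^{(\omega)} \overset{g}\to L^{(\omega)} \to G \to 0$ (the reduction to countable index sets uses the countable generation of $L$ and $G$), and then forms the Bass module $H = \varinjlim\bigl(L^{(\omega)} \overset{g}\to L^{(\omega)} \overset{g}\to \cdots\bigr)$ obtained by iterating $g$. This $H$ lies in $\varinjlim_{\omega}\mathcal F$ \emph{by construction} — no membership in a limit closure needs to be verified — but $H \neq G$, and the real work shifts to proving $H \notin \mathcal D$, i.e.\ that $H$ is not Baer. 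That step is nontrivial: one assumes the canonical presentation of $H$ splits, applies Bass's criterion to get $g^n = kg^{n+1}$ for some endomorphism $k$ of $L^{(\omega)}$, and derives a contradiction from $\operatorname{Coker} g^{i} \cong G^{(i)}$, the fact that $\mbox{End\,}G$ is a skew-field, and $G \not\hookrightarrow L^{(\omega)}$ — ultimately producing an impossible isomorphism $G^{(n+1)} \cong G^{(n)}$. Your treatment of the remaining ingredients ($\mathcal C \subseteq \Add L$, $\mathcal D$ consists of Baer modules, $G$ is not Baer) is correct and matches the paper, but without a valid witness in $\varinjlim_{\omega}\mathcal F \setminus \mathcal D$ the proof does not close. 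If you want to salvage your route, you would have to prove directly that $G$ is a countable direct limit of copies of $L^{(\omega)}$, which is a substantial claim not reducible to $G \in \bar{\mathcal A}$; otherwise, switch to the paper's module $H$.
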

\begin{proof} By \cite[Proposition 7]{AHT}, there are two cotorsion pairs, $(\mathcal T,\mathcal E)$, and $(\mathcal B,\mathcal L)$, in $\rmod R$. The first one is cotilting and generated by $G$, the second one is tilting and generated by $L$, and $\mathcal B \subsetneq \mathcal T$. The modules in $\mathcal T$ are called \emph{torsion-free}, while the ones in $\mathcal B$ are the Baer modules from Example \ref{MLtilt}(iii), cf.\ \cite[\S14.3.2]{GT}. By \cite[Corollary 11]{AHT}, there is a pure-exact sequence $0 \to A \to B \to G \to 0$ such that $B$ is Baer and $A \in \Add L$. Since $G \in \mathcal E \subseteq \mathcal L$, also $B \in \Add L = \mathcal B \cap \mathcal L$, so by Eilenberg's trick, there is a pure exact sequence $0 \to L^{(\kappa)} \overset{g}\to L^{(\lambda)} \to G \to 0$ for some cardinals $\kappa, \lambda$. Since both $L$ and $G$ are countably generated, there exist countable subsets $E \subseteq \kappa$ and $F \subseteq \lambda$ such that $L^{(E)} = L^{(F)} \cap L^{(\kappa)}$ and $L^{(\lambda)} = L^{(F)} + L^{(\kappa)}$, so we can further assume that $\kappa = \lambda = \omega$. 

Let $X = L^{(\omega)}$. Then there is a Bass module $H$ fitting into the pure exact sequence 
\begin{equation}\label{dagger}
0 \to X^{(\omega)} \overset{h}\to X^{(\omega)} \to H \to 0
\end{equation} 
where $h$ maps $x$ from the $i$th copy of $X$ to $x - g(x)$, and $g(x)$ is taken in the $(i+1)$th copy of $X$, for each $i < \omega$. The purity of the sequence implies that $H$ is torsion-free, and clearly $H \in \varinjlim_{\omega} \mathcal F$. 

It remains to prove that $H$ is not Baer (then $H \in ( \varinjlim_{\omega} \mathcal F ) \setminus \mathcal D$, since $\mathcal D$ consists of Baer modules). However, if $H \in \mathcal B$, then (\ref{dagger}) splits, so by a classic result of Bass (see \cite[28.2]{AF}), there exists $0 < n < \omega$ and an endomorphism $k$ of $X$ such that $g^n = kg^{n+1}$. Let $g^\prime : \mbox{Im }g^n \to \mbox{Im }g^{n+1}$ and $k^\prime : \mbox{Im }g^{n+1} \to 
\mbox{Im }g^n$ be the restrictions of $g$ and $k$, respectively. Then $g^\prime k^\prime = 1$, so $\mbox{Ker }k \cap \mbox{Im }g^{n+1} = 0$. Let $\bar{k} : \mbox{Coker }g^{n+1} \to 
\mbox{Coker }g^n$ be the map induced by $k$. Note that $\mbox{Coker }g^i \cong G^{(i)}$ for all $i < \omega$, because $\Ext 1RGG = 0$. Since $\mbox{End }G$ is a skew-field, we infer that $\mbox{Ker } \bar{k}$ is a direct summand in $\mbox{Coker }g^{n+1}$ and $\mbox{Ker }\bar{k} \cong G^{(j)}$ for some $j < \omega$ (cf.\ \cite[12.7]{AF}). By the above, $\mbox{Ker }\bar{k} = (\mbox{Ker }k \oplus \mbox{Im }g^{n+1})/\mbox{Im }g^{n+1} \cong \mbox{Ker }k$. Since $G \notin \mathcal B$, $G$ does not embed into the Baer module $X$, whence $\mbox{Ker }k = 0$. But then $\bar{k}$ yields an isomorphism of $G^{(n+1)}$ onto $G^{(n)}$, a contradiction.   \end{proof}

\section{Locally $\mathcal F$-free modules and approximations}

Finally, we are going to show that in a number of cases, the class of all locally $\mathcal F$-free modules does not provide for precovers. Suprisingly, the phenomenon spans all countable hereditary artin algebras of infinite representation type. Since each deconstructible class closed under extensions is precovering (cf.\ Lemma \ref{filt}), we obtain thus the non-deconstructibility of locally $\mathcal F$-free modules in that setting.

Again we will use tilting theory, the key property being the failure of the $\sum$-pure-split property of the tilting module, that is, the difference between the induced tilting cotorsion pair and its closure. 

\begin{theorem}\label{tiltprec} Let $R$ be a countable ring, $T$ a tilting module which is not $\sum$-pure-split, $(\mathcal A, \mathcal B)$ the tilting cotorsion pair corresponding to $T$, and $(\bar{\mathcal A}, \bar{\mathcal B})$ the closure of $(\mathcal A,\mathcal B)$. Let $\mathcal F$ denote the class of all countably presented modules from $\mathcal A$. 

Assume that $\mathcal L \subseteq \mathcal P_1$, $\mathcal L$ is closed under direct summands, and $N \in \mathcal L$ whenever $N$ is a module such that there exists $L \in \mathcal L$ with $N \subseteq L$ and $L/N \in \bar{\mathcal A}$. 

Then the class $\mathcal L$ is not precovering.     
\end{theorem}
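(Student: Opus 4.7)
The plan is to assume $\mathcal{L}$ is precovering and force a cardinality contradiction by playing the precover property against the tree-decoration construction of Section~4, in the pattern of Lemma~\ref{griffith_counting}.

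The first ingredient is a countably presented Bass module $N \in \varinjlim_\omega \mathcal{F}$ which is \emph{not} in $\mathcal{D}$. The failure of $\sum$-pure-splitness is equivalent to $\mathcal{A} \subsetneq \bar{\mathcal{A}}$ by \cite[13.55]{GT}, and from a non-split pure-exact sequence in $\Add T$ one can extract a countable system $(F_i \overset{g_i}{\to} F_{i+1})$ with $F_i \in \mathcal{F}$ whose direct limit $N$ lies in $\bar{\mathcal{A}} \setminus \mathcal{A}$ (this is the Bass-module aspect of the non-$\sum$-pure-split condition). Since $\mathcal{D} \subseteq \mathcal{A}$ (the building blocks $P$ and $\mathcal{C}$ are both in $\mathcal{A}$, and $\mathcal{A}$ is closed under extensions and summands), such $N$ is outside $\mathcal{D}$, and Lemma~\ref{hillappl} then forces $N \notin \mathcal{L}$, as a countably presented locally $\mathcal{F}$-free module would have to lie in $\mathcal{C} \subseteq \mathcal{D}$.

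Now assume for contradiction that $f \colon L \to N$ is an $\mathcal{L}$-precover. Free modules belong to $\mathcal{L}$ and surject onto $N$, so $f$ is surjective. Since $\mathcal{F} \subseteq \mathcal{A} \subseteq \bar{\mathcal{A}}$ and $\bar{\mathcal{A}}$ is closed under direct limits, $N \in \bar{\mathcal{A}}$, and the downward-closure hypothesis of the theorem forces $K := \ker f \in \mathcal{L}$. Let $\xi \in \Ext{1}{R}{N}{K}$ be the class of $0 \to K \to L \to N \to 0$; if $\xi = 0$ the sequence splits, exhibiting $N$ as a direct summand of $L \in \mathcal{L}$ and so (by closure of $\mathcal{L}$ under summands) contradicting $N \notin \mathcal{L}$. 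Hence $\xi \neq 0$.

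For the contradiction, pick (using \cite[8.26(a)]{GT}) an infinite cardinal $\kappa$ with $\kappa^\omega = 2^\kappa$ and $2^\kappa \geq |L|$, and apply Definition~\ref{decorated} to the Bass system of $N$, producing $L_\kappa \in \mathcal{L}$ and a short exact sequence $0 \to D \to L_\kappa \overset{\pi}{\to} N^{(I)} \to 0$ with $I = \mathrm{Br}(T_\kappa)$, $|I| = 2^\kappa$, and $|D| \leq \kappa$ (using countability of $R$, which makes each $F_i$ countable). For every subset $S \subseteq I$ the homomorphism $\phi_S \colon N^{(I)} \to N$, $(x_\nu)_\nu \mapsto \sum_{\nu \in S} x_\nu$, composed with $\pi$ gives a map $L_\kappa \to N$ which factors through $f$ by the precover property; thus $(\phi_S \pi)^*\xi = 0$ in $\Ext{1}{R}{L_\kappa}{K}$, and the long exact Ext-sequence of $0 \to D \to L_\kappa \to N^{(I)} \to 0$ forces $\phi_S^*\xi$ to lie in the image of $\Hom{R}{D}{K} \to \Ext{1}{R}{N^{(I)}}{K}$. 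Under the canonical identification $\Ext{1}{R}{N^{(I)}}{K} \cong \Ext{1}{R}{N}{K}^{I}$, the element $\phi_S^*\xi$ is $\xi$ in coordinate $\nu \in S$ and zero otherwise, so $\xi \neq 0$ produces $2^{|I|} = 2^{2^\kappa}$ pairwise distinct classes $\{\phi_S^*\xi \mid S \subseteq I\}$ inside that image. But $|\Hom{R}{D}{K}| \leq |K|^{|D|} \leq (2^\kappa)^\kappa = 2^\kappa < 2^{2^\kappa}$, the contradiction.

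The main obstacle is the clean extraction of the initial Bass module $N \in \varinjlim_\omega \mathcal{F} \setminus \mathcal{D}$ from the mere failure of $\sum$-pure-splitness (the cardinal counting afterwards is a routine adaptation of Lemma~\ref{griffith_counting}). The hypotheses $\mathcal{L} \subseteq \mathcal{P}_1$ and the downward closure of $\mathcal{L}$ under $\bar{\mathcal{A}}$-quotients are used precisely to keep the kernel $K$ inside $\mathcal{L}$, which is what allows the precover dialogue to remain wholly within $\mathcal{L}$-modules.
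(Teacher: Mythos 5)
Your core argument is correct, but it is a genuinely different proof from the one in the paper --- in fact it is closer in spirit to the Bazzoni--\v{S}\v{t}ov\'{\i}\v{c}ek argument for Corollary \ref{MLBS} than to the proof of \cite[3.10]{SaT} that the paper generalizes. The paper does \emph{not} attack $N$ directly: it first shows $\mathcal L^\perp = \bar{\mathcal B}$ via Lemma \ref{griffith_counting}, forms the special $\bar{\mathcal B}$-preenvelope $0 \to N \to \bar B \to \bar A \to 0$, and proves that $\bar B$ (not $N$) has no $\mathcal L$-precover, by a delicate analysis of the submodules $M \leq L$ with $M + \Ker f = L$, split into $\mathcal X$ and $\mathcal Y$ according to membership in $\bar{\mathcal B}$, together with an auxiliary special $\{L'\}^\perp$-preenvelope. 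That route is where all three extra hypotheses enter: $\mathcal L \subseteq \mathcal P_1$ makes $\{L'\}^\perp$ closed under epimorphic images, and the $\bar{\mathcal A}$-quotient condition is what shows $\bar B \notin \mathcal L$. Your route instead shows that $N$ itself has no precover: given $f\colon L \to N$ with kernel $K$ and extension class $\xi$, every $g \in \Hom R{L_\kappa}{N}$ satisfies $g^*\xi = 0$ by the precover property, so all the $2^{2^\kappa}$ pairwise distinct classes $\phi_S^*\xi$ land in the image of $\Hom RDK$, which has at most $2^\kappa$ elements; this is a valid pointwise refinement of the counting in Lemma \ref{griffith_counting}. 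Note that your argument uses only closure of $\mathcal L$ under direct summands (to rule out $\xi = 0$); the hypotheses $\mathcal L \subseteq \mathcal P_1$ and the quotient condition, and even your claim $K \in \Ker f \in \mathcal L$, play no role in it, so what you obtain is in effect a stronger statement than the theorem --- worth flagging explicitly rather than hiding.

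The one genuine soft spot is the one you name yourself: the production of a countably presented Bass module $N \in \varinjlim_\omega \mathcal F \setminus \mathcal A$. Extracting a countable direct system directly ``from a non-split pure-exact sequence in $\Add T$'' is not straightforward, since the modules in $\Add T$ involved may be arbitrarily large. The paper's mechanism is different and is where the countability of $R$ is really used: over a countable ring $\bar{\mathcal A} = \Filt(\mathcal C)$ with $\mathcal C$ the countably presented modules of $\bar{\mathcal A}$ (\cite[6.17]{GT}); since $\mathcal A$ is closed under transfinite extensions, $\mathcal A \subsetneq \bar{\mathcal A}$ forces some countably presented $N \in \bar{\mathcal A} \setminus \mathcal A$, and any such $N$ is automatically a Bass module for $\mathcal F$ by \cite[2.12]{GT}. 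With that substitution, your $N \notin \mathcal D$ (via $\mathcal D \subseteq \mathcal A$) and hence $N \notin \mathcal L$ (via Lemma \ref{hillappl}) go through, and the rest of your proof stands.
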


\begin{proof} We have $\mathcal L \subseteq \varinjlim \mathcal F = \varinjlim \mathcal A  = \bar{\mathcal A}$ (see \cite[8.40 and 13.46]{GT}). As $R$ is countable, $\bar{\mathcal A} = \Filt (\mathcal C )$, where $\mathcal C$ is the class of all countably presented modules from $\bar{\mathcal A}$ (see \cite[6.17]{GT}). Since $T$ is not $\sum$-pure-split, $\mathcal A \subsetneq \bar{\mathcal A}$, so there exists a countably presented module 
$N \in \varinjlim _{\omega} \mathcal F \setminus \mathcal L$. Each such module $N$ is a Bass module for $\mathcal F$, so the Eklof lemma and Lemma \ref{griffith_counting} yield $\mathcal L ^\perp = \bar{\mathcal B}$.   
The rest of the proof is a generalization of the one for \cite[3.10]{SaT}.

Since $\bar{\mathcal B}$ is an enveloping class, there exists a short exact sequence $0 \to N \to \bar{B} \to \bar{A} \to 0$ where $\bar{B} \in \bar{\mathcal B}$ and $\bar{A} \in \bar{\mathcal A}$. Then also $\bar{B} \in \bar{\mathcal A}$, but $\bar{B} \notin \mathcal L$ (otherwise $N \in \mathcal L$ by our assumption on the class $\mathcal L$, since $\bar{B}/N \in \bar{A}$). 

We claim that there does not exist any $\mathcal {L}$--precover of the module $\bar{B}$. Assume $f: L \to \bar{B}$ is such a precover. Since $\bar{B} \in \bar{\mathcal A} = \varinjlim \mathcal L$, $f$ is surjective.

Let $\mathcal X = \{ M \in \rmod R \mid M + \Ker f = L \} \cap \bar{\mathcal B}$, and $\mathcal Y = \{ M \in \rmod R \mid  M + \Ker f = L \} \setminus \bar{\mathcal B}$.

Note that $X \cap \Ker f \notin \bar{\mathcal B}$ for each $X \in \mathcal X$, since otherwise $\bar{B} \in \bar{\mathcal A}$ would give $X = (X \cap \Ker f ) \oplus X^\prime$ for some $X^\prime \cong L/\Ker f \cong \bar{B}$, whence $L = \Ker f \oplus X^\prime$, and $\bar{B} \in \mathcal L$ (because $\mathcal L$ is closed under direct summands), a contradiction.

Since $\mathcal L ^\perp = \bar{\mathcal B}$, for each $X \in \mathcal X$, there exists $L_X \in \mathcal L$, such that $\Ext 1R{L_X}{X \cap \Ker f} \neq 0$. As $X \in \bar{\mathcal B}$ and $\bar{B} \cong X/(X \cap \Ker f)$, we can choose $f_X \in \Hom R{L_X}{\bar{B}}$ so that $f_X$ does not factorize through $f \restriction X$, and let $\tilde L = L \oplus \bigoplus _{X \in \mathcal X} L_X \in \mathcal L$.

Similarly, for each $Y \in \mathcal Y$ there exists a module $L_Y \in \mathcal L$ with $\Ext 1R{L_Y}Y \neq 0$. Let $L^\prime = \bigoplus _{Y \in \mathcal Y}L_Y \in \mathcal L$. 

Let $\varepsilon: \tilde L \hookrightarrow Z$ be a special $\{ L^\prime \}^\perp$--preenvelope of $\tilde L$. Since $\mbox{Coker}(\varepsilon)\in {}^\perp (\{ L^\prime \}^\perp) \subseteq \mathcal L$, also $Z \in \mathcal L$.

As $\bar{B} \in \bar{\mathcal B}$, we can factorize the epimorphism $(f \oplus \bigoplus _{X \in \mathcal X} f_X): \tilde L \to \bar{B}$ through $\varepsilon$, and obtain an epimorphism $h \in\Hom RZL$ such that $f \oplus \bigoplus _{X \in \mathcal X} f_X = h \varepsilon$. Since $f$ is a $\mathcal L$--precover of $\bar{B}$, there exists $g \in \Hom RZL$ such that $fg = h$. In particular, $I + \Ker f = L$, where $I = \mbox{Im} (g)$.

Since the projective dimension of $L^\prime \in \mathcal L$ is $\leq 1$, the class $\{ L^\prime \}^\perp$ is closed under homomorphic images, and hence contains $I$. By the definition of $L^\prime$ above, necessarily $L^\prime \in \mathcal X$. However, $f_I = h (\varepsilon\restriction L_I) = (f \restriction I) g (\varepsilon\restriction L_I)$, in contradiction with the choice of the homomorphism $f_I$ above. This proves our claim. 
\end{proof}

Again, we will have three corollaries: for flat Mittag-Leffler modules over non-perfect rings, for modules over Dedekind domains, and for locally Baer modules over hereditary artin algebras of infinite representation type. The first one has recently been proved in \cite{BS}, the other two are new:

\begin{corollary}\label{MLBS} 
Let $R$ be a countable non-right perfect ring. Then the class of all flat Mittag-Leffler modules is not precovering.  
\end{corollary}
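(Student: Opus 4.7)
The plan is to apply Theorem \ref{tiltprec} to the tilting setup that yields flat Mittag-Leffler modules, namely the one already exploited in the proof of Corollary \ref{ML}. Concretely, I would take $T = \bigoplus_{P \in \mathcal G} P$, where $\mathcal G$ is a representative set of all countably presented projective $R$-modules. This $T$ is a projective generator, hence tilting, and its tilting cotorsion pair is $(\mathcal P_0, \rmod R)$ with closure $(\mathcal F_0, \mathcal E)$, as recorded in the Preliminaries. Consequently the class $\mathcal F$ of countably presented modules in $\mathcal A = \mathcal P_0$ coincides with $\mathcal G$, and by Example \ref{MLtilt}(i) the resulting class $\mathcal L$ of locally $\mathcal F$-free modules is exactly the class of all flat Mittag-Leffler modules. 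The classical theorem of Bass (cited in the Preliminaries) says that a projective generator is $\sum$-pure-split iff $R$ is right perfect; since $R$ is assumed not right perfect, $T$ is not $\sum$-pure-split, so the tilting hypotheses of Theorem \ref{tiltprec} hold.

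It then remains to verify the three structural hypotheses imposed on $\mathcal L$. Closure under direct summands is a standard feature of flat Mittag-Leffler modules (and is immediate for locally $\mathcal F$-free modules in this case, since pure summands inherit a witnessing system). For the ``pure subobject'' condition, suppose $N \subseteq L$ with $L \in \mathcal L$ and $L/N \in \bar{\mathcal A} = \mathcal F_0$; flatness of $L/N$ forces the embedding $N \hookrightarrow L$ to be pure, and pure submodules of flat Mittag-Leffler modules are themselves flat Mittag-Leffler, so $N \in \mathcal L$.

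The condition $\mathcal L \subseteq \mathcal P_1$ is where the countability hypothesis on $R$ is genuinely used: every flat Mittag-Leffler module is flat, and a classical cardinality bound (due to Gruson--Jensen) gives $\text{proj.dim}(F) \leq n+1$ for every flat module $F$ over a ring of cardinality at most $\aleph_n$; specializing to $n = 0$ yields exactly $\mathcal F_0 \subseteq \mathcal P_1$, hence $\mathcal L \subseteq \mathcal P_1$. With all hypotheses of Theorem \ref{tiltprec} verified, the theorem applies and delivers the conclusion that $\mathcal L$ is not precovering.

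I expect the main obstacle to be the $\mathcal L \subseteq \mathcal P_1$ hypothesis: the summand and pure-subobject conditions are essentially built into the intrinsic description of flat Mittag-Leffler modules, whereas the projective-dimension bound genuinely requires the countability of $R$, reflecting the fact that Theorem \ref{tiltprec} (and thus this corollary) only covers the countable case --- the general non-perfect case being one of the open strands mentioned in the introduction.
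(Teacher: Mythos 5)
Your proposal is correct and is essentially the paper's own argument: the paper simply takes $T=R$, which induces the same tilting cotorsion pair $(\mathcal P_0,\rmod R)$ as your choice $T=\bigoplus_{P\in\mathcal G}P$, and hence the same class $\mathcal F$ of countably presented projectives and the same class $\mathcal L$ of flat Mittag-Leffler modules. The verification of the hypotheses of Theorem \ref{tiltprec} is likewise identical: Bass's theorem for the failure of $\sum$-pure-splitness, closure of flat Mittag-Leffler modules under direct summands and under pure submodules (which handles the condition on $N\subseteq L$ with $L/N\in\bar{\mathcal A}=\mathcal F_0$), and the bound $\mathcal F_0\subseteq\mathcal P_1$ over countable rings for $\mathcal L\subseteq\mathcal P_1$.
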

\begin{proof} This follows from Theorem \ref{tiltprec} by taking $T = R$. Indeed, $T$ is not $\sum$-pure-split because $R$ is not right perfect, and  $\mathcal L$ is the class of all flat Mittag-Leffler modules which is closed under direct summands and consists of modules of projective dimension $\leq 1$ (in fact, since $R$ is countable, each flat module has projective dimension $\leq 1$). The condition of $N \in \mathcal L$ in the case when $N \subseteq L^\prime \in \mathcal L$ and $L^\prime/N \in \bar{\mathcal A}$ follows from the fact that the class of all flat Mittag-Leffler modules is closed under pure submodules.      
\end{proof}

\begin{corollary}\label{Dedekindnprec} Let $R$ be a countable Dedekind domain and $P$ a non-empty set of maximal ideals in $R$. Consider the tilting module $T_P = R_P  \bigoplus_{q \in \mspec R \setminus P} E(R/q)$, where $R_P = \bigcap_{p \in P} R_{(p)}$. Then $T_P$ is not $\sum$-pure-split. 

Let $(\mathcal A _P, \mathcal B _P)$ be the tilting cotorsion pair induced by $T_P$, $\mathcal F _P$ be the class of all countably presented modules from $\mathcal A _P$, and $\mathcal L _P$ the class of all locally $\mathcal F _P$-free modules. Then $\mathcal L _P$ is not precovering.  
\end{corollary}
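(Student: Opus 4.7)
The plan is to invoke Theorem \ref{tiltprec} for $T_P$. The hypothesis that $R$ is countable is given, and Corollary \ref{Dedekind} already confirms that $T_P$ is tilting. What remains is to check that $T_P$ is not $\sum$-pure-split, and that the class $\mathcal{L}_P$ satisfies the three structural hypotheses of Theorem \ref{tiltprec}.

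For the non-$\sum$-pure-splitness of $T_P$, I would reuse the Bass module $C$ manufactured in the proof of Corollary \ref{Dedekind} via the choice $a_i = x$ for a fixed $x \in p \setminus p^2$, $p \in P$; that proof already establishes $C \in \varinjlim_{\omega} \mathcal{F}_P \setminus \mathcal{D}$, and Remark \ref{except}(ii) then converts this into the failure of the $\sum$-pure-split property for $T_P$.

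As for the structural hypotheses on $\mathcal{L}_P$, the inclusion $\mathcal{L}_P \subseteq \mathcal{P}_1$ is automatic, since $R$ is hereditary. The remaining two conditions---closure of $\mathcal{L}_P$ under direct summands, and the implication that $N \subseteq L \in \mathcal{L}_P$ with $L/N \in \bar{\mathcal{A}}_P$ forces $N \in \mathcal{L}_P$---would be verified in the spirit of Corollary \ref{MLBS}, by exhibiting a witnessing set for $N$ of the form $\{S \cap N \mid S \in \mathcal S\}$, where $\mathcal S$ witnesses the local $\mathcal{F}_P$-freeness of $L$. The axioms (S1)--(S3) for this new family would be checked by a diagram chase on the short exact sequence $0 \to S \cap N \to S \to (S+N)/N \to 0$, using that $(S+N)/N \subseteq L/N \in \bar{\mathcal{A}}_P$.

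The critical ingredient---and the main obstacle---is the submodule-closure of $\bar{\mathcal{A}}_P$ (so that $(S+N)/N$ really lies in $\bar{\mathcal{A}}_P$), together with the preservation of the countable $\mathcal{F}_P$-filtration property through the resulting short exact sequence. In the Dedekind setting, both are secured by a Dedekind-specific characterization of $\bar{\mathcal{A}}_P$ as the class of modules $M$ with $\Tor{1}{R}{R/q}{M} = 0$ for every $q \in \mspec R \setminus P$, a condition visibly closed under submodules because every $R/q$ has projective dimension $\leq 1$; combined with Lemma \ref{hillappl}, this upgrades ``countably generated and in $\bar{\mathcal{A}}_P$'' to ``countably $\mathcal{F}_P$-filtered'', completing the verification. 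With these ingredients in place, Theorem \ref{tiltprec} yields that $\mathcal{L}_P$ is not precovering.
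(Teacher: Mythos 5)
Your overall skeleton is the right one, and the first half of your argument works: deducing the failure of the $\sum$-pure-split property from the Bass module $C \in (\varinjlim_{\omega}\mathcal F)\setminus\mathcal D$ constructed in Corollary \ref{Dedekind}, via Remark \ref{except}(ii), is a legitimate (if roundabout) alternative to the paper's direct observation that $R_P$ is a non-perfect subring of $Q$, so that some pure embedding $R_P^{(\omega)}\hookrightarrow R_P^{(\omega)}$ fails to split. (You should note that the countability of $\mspec R \setminus P$, which Corollary \ref{Dedekind} assumes, follows here from the countability of $R$, every ideal of a Dedekind domain being $2$-generated.) The inclusion $\mathcal L_P \subseteq \mathcal P_1$ and the idea of witnessing the local $\mathcal F_P$-freeness of a submodule $N\subseteq L$ by the family $\{S\cap N \mid S\in\mathcal S\}$ are also correct.

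The gap is in your verification of (S1). The claimed upgrade from ``countably generated and in $\bar{\mathcal A}_P$'' to ``countably $\mathcal F_P$-filtered'' is false, and in fact it contradicts the very non-$\sum$-pure-splitness you just established: since $R$ is countable, $\bar{\mathcal A}_P$ is filtered by its countably presented members, so if each of these were $\mathcal F_P$-filtered we would get $\bar{\mathcal A}_P \subseteq \Filt(\mathcal F_P) \subseteq \mathcal A_P$, hence $\mathcal A_P = \bar{\mathcal A}_P$ and $T_P$ would be $\sum$-pure-split by \cite[13.55]{GT}. Concretely, the Bass module $C$ from your first step is countably presented and lies in $\bar{\mathcal A}_P$, yet it is not countably $\mathcal F_P$-filtered (else $C\in\mathcal D$). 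Lemma \ref{hillappl} cannot repair this: it only identifies the countably generated locally $\mathcal F_P$-free modules with the countably $\mathcal F_P$-filtered ones; it says nothing about countably generated members of $\bar{\mathcal A}_P$. The correct --- and much shorter --- route is the one the paper takes: because $R$ is hereditary, the class $\mathcal A_P={}^\perp\mathcal B_P$ itself is closed under submodules (for $N\subseteq A$ with $A\in\mathcal A_P$ and $B\in\mathcal B_P$, the map $\Ext 1RAB \to \Ext 1RNB$ is surjective since $\Ext 2R{A/N}{B}=0$). Hence $S\cap N$ is a countably presented module belonging to $\mathcal A_P$, i.e., it lies in $\mathcal F_P$ by the very definition of $\mathcal F_P$; no filtration argument and no detour through $\bar{\mathcal A}_P$ is needed. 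This shows that $\mathcal L_P$ is closed under arbitrary submodules, which simultaneously gives closure under direct summands and renders the third hypothesis of Theorem \ref{tiltprec} vacuous.
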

\begin{proof} Let $Q$ be the quotient field of $R$. Then $R_P$ is a subring of $Q$ which is not perfect, hence there is a short exact sequence of the form $0 \to R_P^{(\omega)} \to  R_P^{(\omega)} \to N \to 0$ which is pure, but not split, so $T_P$ is not $\sum$-pure-split.

Since the class $\mathcal A _P$ is closed under submodules, so is $\mathcal L _P$, and Theorem \ref{tiltprec} applies. 
\end{proof}

\begin{corollary}\label{locbaernprec} 
Let $R$ be a countable finite dimensional hereditary algebra of infinite representation type. Then the class of all locally Baer modules is not precovering.  
\end{corollary}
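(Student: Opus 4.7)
The plan is to apply Theorem~\ref{tiltprec} to the Lukas tilting module $T$ over $R$ (see \cite{AHT} or \cite[\S14.3.2]{GT}); the induced tilting cotorsion pair is $(\mathcal{A},\mathcal{B})$ with $\mathcal{A}$ equal to the class of Baer modules (Example~\ref{MLtilt}(iii)). Since $R$ is a noetherian artin algebra, countably generated equals countably presented (Lemma~\ref{dimenze}), and Hill's Lemma gives a countable filtration by finitely generated preprojectives for every countably generated Baer module. Consequently the class $\mathcal{C}$ appearing in Definition~\ref{def_loc_free} is the same whether one takes $\mathcal{F}$ as a representative set of finitely generated preprojective modules (Example~\ref{MLtilt}(iii)) or as the set of countably presented modules in $\mathcal{A}$ as in Theorem~\ref{tiltprec}. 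Hence the class $\mathcal{L}$ of locally Baer modules coincides with the class of locally $\mathcal{F}$-free modules in the sense of Theorem~\ref{tiltprec}.

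Two hypotheses of Theorem~\ref{tiltprec} are immediate: $R$ is countable by assumption, and $\mathcal{L}\subseteq\mathcal{P}_1$ since $R$ is hereditary. For the non-$\sum$-pure-split property of $T$, I would use that infinite representation type of $R$ forces $\mathcal{A}\subsetneq\bar{\mathcal{A}}$, and invoke \cite[13.55]{GT}: it suffices to exhibit one countably presented module in $\bar{\mathcal{A}}\setminus\mathcal{A}$, which is produced as a Bass module $N = \varinjlim_{i<\omega} P_i$ built from an infinite chain $P_0\to P_1\to\cdots$ of pairwise non-isomorphic indecomposable finitely generated preprojectives with all connecting maps nonzero, available in the preprojective component of the AR-quiver. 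For the last two hypotheses, note that closure of $\mathcal{L}$ under direct summands follows from the stronger subobject condition: if $L' = N\oplus N''$, then $L'/N\cong N''\in\mathcal{L}\subseteq\bar{\mathcal{A}}$. To prove the subobject condition itself, let $N\subseteq L'\in\mathcal{L}$ with $L'/N\in\bar{\mathcal{A}}$. Since $\bar{\mathcal{A}}$ is the torsion-free class of the cotilting torsion pair associated to $T$, it is closed under submodules. Given a witness $\mathcal{S}$ of local Baerness for $L'$, put $\mathcal{S}' = \{\,S\cap N\mid S\in\mathcal{S}\,\}$. Each $S\cap N$ is countably generated (a submodule of a countably generated module over a noetherian ring), and $S/(S\cap N)$ embeds into $L'/N\in\bar{\mathcal{A}}$; applying $\Hom R{-}X$ for $X\in\mathcal{B}$ to the short exact sequence $0\to S\cap N\to S\to S/(S\cap N)\to 0$ yields the exact piece
\begin{equation*}
\Ext 1R S X \longrightarrow \Ext 1R {S\cap N} X \longrightarrow \Ext 2R {S/(S\cap N)} X,
\end{equation*}
whose left term vanishes because $S\in\mathcal{A} = {}^\perp\mathcal{B}$, and whose right term vanishes by hereditariness of $R$. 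Thus $S\cap N\in\mathcal{A}$, which being countably generated forces $S\cap N\in\mathcal{C}$; conditions (S1)--(S3) for $\mathcal{S}'$ are then inherited from the corresponding properties of $\mathcal{S}$.

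With all four hypotheses of Theorem~\ref{tiltprec} verified, the theorem gives the non-precovering of $\mathcal{L}$, finishing the corollary. The main obstacle is the proof that the Lukas tilting module is not $\sum$-pure-split in full generality: Corollary~\ref{Lukas} handles only the tame hereditary case via the countably generated generic module, while the wild case requires a more delicate use of the preprojective component of the AR-quiver to extract a countably presented non-Baer module in $\varinjlim_\omega\mathcal{F}$. The rest of the verification is comparatively routine once $\bar{\mathcal{A}}$ is identified as the torsion-free class of the associated cotilting torsion pair.
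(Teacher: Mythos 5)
Your overall route is exactly the paper's: apply Theorem~\ref{tiltprec} to the Lukas tilting module $T=L$, identify $\mathcal F=\mathcal C$ with the countably presented Baer modules so that $\mathcal L$ is the class of locally Baer modules, and verify the remaining hypotheses by showing $\mathcal L$ is closed under submodules (your $\Ext{1}{R}{S}{X}\to\Ext{1}{R}{S\cap N}{X}\to\Ext{2}{R}{S/(S\cap N)}{X}$ argument is just an expanded form of the paper's one-line remark that $\mathcal R$, hence $\mathcal A$, hence $\mathcal L$, is closed under submodules over a hereditary algebra; note it does not even need the hypothesis $L'/N\in\bar{\mathcal A}$). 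The identification of the two candidate classes $\mathcal F$, and the observations that $R$ is countable and $\mathcal L\subseteq\mathcal P_1$ by hereditariness, are all correct and match the paper.

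The genuine gap is the one you flag yourself: the hypothesis that $L$ is not $\sum$-pure-split, equivalently $\mathcal A\subsetneq\bar{\mathcal A}=\varinjlim\mathbf p$. The paper disposes of this by citation (``Recall that $L$ is not $\sum$-pure-split''), the fact being established for arbitrary hereditary artin algebras of infinite representation type in \cite{AKT} (see also \cite[\S 14.3.2]{GT}); so the intended proof is to quote it, not to reprove it. Your proposed substitute --- take a chain $P_0\to P_1\to\cdots$ of pairwise non-isomorphic indecomposable preprojectives with nonzero connecting maps and claim the Bass module $N=\varinjlim P_i$ lies in $\bar{\mathcal A}\setminus\mathcal A$ --- does not work as stated: nonzero connecting maps do not prevent the limit from being finitely generated or even Baer, and showing that a specific Bass module over the preprojective component fails to be Baer is precisely the nontrivial part (compare the amount of work needed in Corollary~\ref{Lukas}, where Bass's splitting criterion, the generic module, and the skew-field structure of its endomorphism ring are all invoked just to handle one such module in the tame case). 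So as written your argument is complete only modulo this hypothesis; either cite \cite{AKT} for it, or supply an actual proof that some direct limit of preprojectives is not Baer in the wild case as well.
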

\begin{proof} This follows from Theorem \ref{tiltprec} by taking $T = L$, the Lukas tilting module. Recall that $L$ is not $\sum$-pure-split, $\mathcal F = \mathcal C$ is the class of all countably presented Baer modules, that is, the countably $\mathcal R$-filtered modules (where $\mathcal R$ denotes the class of all finitely presented preprojective modules). Moreover, $\mathcal L$ is the class of all locally Baer modules. Since $\mathcal R$ is closed under submodules, so is $\mathcal L$, and Theorem \ref{tiltprec} applies.   
\end{proof}

\begin{remark}\label{different} Notice that in the tame hereditary case, the classes $\mathcal L$ constructed in Corollaries \ref{Lukas} and \ref{locbaernprec} are different: in the former one, the countably generated elements of $\mathcal L$ are countable direct sums of copies of $L$, while in the latter one, the countably generated modules in $\mathcal L$ are exactly the countably generated Baer modules. So the former class contains no non-zero finitely generated modules, while the finitely generated modules in the latter are exactly all the finitely generated preprojective modules. 
\end{remark}

\end{document}